\documentclass[10pt,reqno,oneside]{amsart}
\usepackage{stmaryrd}
\usepackage{amsmath}
\usepackage{amssymb}
\usepackage{amstext}
\usepackage{indentfirst}
\usepackage{cite}
\usepackage{mathrsfs}
\usepackage{enumerate}
\usepackage{hyperref}
\usepackage{amsfonts}
\usepackage{enumitem}
\usepackage{marvosym}
\usepackage{color}
\usepackage{pifont}
\theoremstyle{definition}
\newtheorem{Thm}{Theorem}
\newtheorem{Prop}{Proposition}

\newtheorem{Lemma}{Lemma}

\newtheorem{Rmk}{Remark}

\newtheorem{Conj}{Conjecture}

\numberwithin{equation}{section}
\title{The rigity of spaces with cyclic parallel Ricci tensor}

\author{
 Fengyuan Zhang \\
  Academy of mathematics and Systems Science\\
  \texttt{zhangfengyuan@amss.ac.cn} \\
  }

\begin{document}
\maketitle
\begin{abstract}
The aim of this paper is to classify some special Riemannian manifolds with cyclic parallel Ricci tensor, i.e.
    \begin{equation}
        D_{ijk}=\nabla_{i}R_{jk}+\nabla_{j}R_{ki}+\nabla_{k}R_{ij}=0\nonumber\\
    \end{equation}
These structures include non-compact gradient shrinking Ricci soliton, compact $(m > 1)$-quasi-Einstein manifolds with boundary and critical spaces. We will construct some integral identities and  make use of the curvature conditions reasonably to prove that the Ricci tensor is parallel.
\end{abstract}


\section{Introduction}
The study of generalizations of Einstein metrics has long been a central theme in differential geometry. Among these, three classes of geometric structures have attracted particular attention due to their deep connections with Ricci flow, warped product Einstein metrics, and general relativity: gradient Ricci solitons, quasi-Einstein manifolds, and vacuum static spaces.
\subsection{Three types of space}
\subsubsection{Gradient Ricci soliton}
A complete Riemannian manifold $(M^n, g)$ is called a  gradient Ricci soliton if there exists a smooth function $f$ on $M^n$ such that
    \begin{equation}\label{Eqn1}
        \nabla^2f+Ric=\lambda g
    \end{equation}
for some constant $\lambda\in {\mathbb R}$. Here, $\nabla ^2 f$ denotes the Hessian of $f$. Gradient Ricci solitons arise often as singularity models of the Ricci flow, and that is why understanding them is an important question in the field. Depending on the behavior of the Ricci flow on solitons, they are said to be  shrinking, or  steady, or   expanding if $\lambda>0$, or $\lambda=0$, or $\lambda<0$, respectively. Shrinkers can also be regarded as critical points of the Perelman’s entropy functional and play a significant role in Perelman’s resolution of the Poincaré conjecture\cite{perelman2002entropy}. The study of solitons has become increasingly important in both the study of the Ricci flow and metric measure space. This paper focuses on the classification results for shrinkers.

Hamilton \cite{hamilton1995formation} classified all $2$-dimensional gradient shrinking Ricci solitons, showing that they are either isometric to the Euclidean plane $\mathbb{R}^2$ or a quotient of the round sphere $\mathbb{S}^2.$ In dimension three, a combination of results due to Ivey \cite{ivey1994new}, Perelman \cite{perelman2003ricci}, Naber \cite{naber2010noncompact}, Ni–Wallach \cite{ni2008classification}, and Cao–Chen–Zhu \cite{cao2007recent} yields a complete classification: every complete $3$-dimensional gradient shrinking Ricci soliton is isometric to a finite quotient of one of the following model spaces: the round sphere $\mathbb{S}^3,$ the Gaussian shrinking soliton $\mathbb{R}^3,$ or the round cylinder $\mathbb{S}^{2}\times\mathbb{R}.$ Further classification results have been obtained under additional curvature assumptions. Locally conformally flat complete gradient shrinking Ricci solitons  were classified in \cite{cao2011locally} as finite quotients of ${\mathbb R}^n,{\mathbb S}^{n-k}\times{\mathbb R}^{k},{\mathbb S}^n.$ Subsequently, Fernández-López and García-Río\cite{fernandez2011rigidity}, and independently Munteanu and Sesum\cite{munteanu2013gradient}, established the classification of shrinking gradient Ricci solitons with harmonic Weyl curvature in the compact and non-compact settings, respectively. Later, in 2013, Cao and Chen\cite{cao2013} gave a complete classification of Bach-flat shrinking gradient Ricci solitons.
\subsubsection{Quasi-Einstein manifold}
An $m$-quasi-Einstein manifold is defined by the existence of a positive function 
$u$ satisfying the following system
\begin{align}\label{Eqn2}
    \left\{\begin{array} {l l} {\nabla^{2} u=\frac{u} {m} ( R i c-\lambda g )} & {\mathrm{in} \ M ,} \\ {u > 0} & {\mathrm{on} \ i n t ( M ) ,} \\ {u=0} & {\mathrm{on} \ \partial M ,} \\ \end{array} \right.
\end{align}
for some constant $\lambda\in {\mathbb R}$ and $0<m<\infty$. These structures are intimately related to the construction of warped product Einstein metrics (see \cite{case2011rigidity}) and have been extensively studied. 

Inspired by some successful works on gradient Ricci solitons, locally conformally flat quasi-Einstein manifolds were investigated in \cite{catino2010},\cite{he2012}. Chen and He \cite{chen2013bach} proved that a Bach-flat $(n \geq 4)$-dimensional compact quasi-Einstein manifold $(M,g)$ with smooth boundary $\partial M$ is either Einstein or a finite quotient of a warped product with $(n-1)$-dimensional Einstein fiber. Later, He, Petersen and Wylie\cite{he2014warped} showed that any quasi-Einstein manifold with harmonic curvature is necessarily rigid. Diógenes and Gadelha \cite{diogenes2022remarks} initiated the study of quasi-Einstein metrics with zero radial Weyl curvature. Recently, Cao, Li \cite{cao2025quasi} classified $m$-quasi-Einstein manifolds with harmonic Weyl curvature.

\subsubsection{Critical Spaces}
A third class, we consider a family of geometric structures unified by the following equation:
\begin{equation}\label{Eqn3}
        \nabla^2f+\frac{R}{n(n-1)}fg=(f+a)\mathring{Ric}+bg,
    \end{equation}
    where $f$ is a smooth function on $(\textit{M},g),$ $a,b\in \mathbb{R}$ are constants, $R$ denotes the scalar curvature, $\mathring{Ric}$ denotes the traceless Ricci tensor, and $g$ is the metric. Equation \eqref{Eqn3} can be seen as a generalization of a family of well-known and extensively studied geometric structures. We refer to this class as critical spaces. Recall that Fischer and Marsden \cite{fischer1975deformations} established  the $L^2-$formal adjoint of the linearization of the scalar curvature is given by
          $$\mathcal{L}_{ g}^{*}(f)=\nabla^2 f-\Delta fg-fRic.$$
    We now introduce the following three classes of critical spaces via this operator.
\begin{itemize}
  \item CPE metrics: $(a, b) = (1, 0)$.
\end{itemize}

    Equation \eqref{Eqn3} then reduces to the following form:
    $$
    \mathcal{L}_{ g}^{*}(f)=\mathring{Ric}
    $$
    This is precisely the critical point equation (CPE) for the total scalar curvature functional restricted to the space of constant scalar curvature metrics. More precisely, a CPE metric is a triple 
    $(M,g,f)$ on a compact oriented manifold with constant scalar curvature, where $f$ is a smooth potential function satisfying the Euler-Lagrange equation of the Hilbert-Einstein action on 
    \begin{equation}
        \mathcal{C}=\left\{g| \operatorname{Vol}_{\textit{M}^{n}}(g)=1 \text { and } R_g=\mathrm{constant}\right\}.\nonumber
    \end{equation}
It is a long-standing conjecture, attributed to Besse\cite{besse2007einstein}:
\begin{Conj}\label{conj1}
    CPE metrics are Einstein.
\end{Conj}
The proof of the Conjecture \ref{conj1} has been a subject of interest for many authors.
In \cite{lafontaine1983geometrie}, Lafontaine showed that the Conjecture \ref{conj1} is true for a locally conformally
flat manifold. Later on, this result was improved by Chang, Hwang and Yun\cite{yun2014total} under
harmonic curvature assumption which is clearly weaker than locally conformally
flat condition considered in Lafontaine’s result. This same authors in \cite{chang2012critical}, were able to solve the conjecture for a manifold satisfying the parallel Ricci tensor condition.

\begin{itemize}
  \item Vacuum static space: $(a, b) = (0, 0)$.
\end{itemize}

When $a=b=0$, equation \eqref{Eqn3} reduces to vacuum static equation
\begin{equation}\label{static}
     \nabla^2f+\frac{R}{n(n-1)}fg=f\mathring{Ric},\nonumber
\end{equation}
which arises naturally as the kernel of the $L^2$
 -formal adjoint of the linearized scalar curvature operator $\mathcal{L}_{ g}^{*}(f)$ and also corresponds to static solutions of the Einstein field equations in general relativity. Fischer-Marsden \cite{fischer1975deformations} made the following conjecture:
\begin{Conj}\label{conj2}
    Any compact vacuum static space is Einstein.
\end{Conj}

 If it is true, by Obata’s theorem \cite{obata1962certain}, such a space must be a standard sphere
or a Ricci flat space. However, it turns out the conjecture is not true. In dimension $n\geq3$, Kobayashi\cite{kobayashi1982differential} and Lafontaine\cite{lafontaine1983geometrie} independently proved that closed locally conformally flat examples with scalar curvature $n(n-1)$ are isometric to the standard sphere $S^{n}(1),$  a finite quotient of $S^{1} ( \sqrt{1 / n} )\ \times S^{n-1} ( \sqrt{( n-2 ) / n} ),$ or a finite quotient of the warped product $S^{1} ( \sqrt{1 / n} ) \times_{h} S^{n-1} ( \sqrt{( n-2 ) / n} ),$ where $h$ is a positive periodic function on the circle factor. This provided the first counterexample to Conjecture \ref{conj2}.  Subsequently, Qing and Yuan\cite{qing2013note} extended this result to the Bach-flat setting, showing that closed Bach-flat vacuum static spaces with the same scalar curvature are either $S^{n}(1),$ a finite quotient of $S^{1} ( \sqrt{1 / n} ) \times F,$ or a finite quotient of $S^{1} ( \sqrt{1 / n} ) \times_{h} F,$ with $F$ a closed Einstein manifold of dimension $n-1$ and Einstein constant $n.$ In four dimensions, Kim and Shin\cite{kim2018four} classified such spaces under the assumption of harmonic curvature. In the case of higher dimensions, one can consult \cite{kim2026vacuum} or \cite{Li2021}. In recent years, Baltazar, Barros, Batista, and Viana \cite{baltazar2020static}, as well as Ye \cite{ye2023closed}, have
demonstrated that closed vacuum static spaces of dimension $n \geq 5$ with zero radial Weyl curvature are Bach-flat, thereby reducing their classification to the aforementioned theorem of Qing and Yuan.

\begin{itemize}
  \item Miao-Tam critical metric: $(a, b) = (0, -\frac{1}{n-1})$.
\end{itemize}

When $a=0$ and $b=-\frac{1}{n-1},$ This choice reduces the equation \eqref{Eqn3} to the so-called Miao-Tam critical metric
$$
\mathcal{L}_{ g}^{*}(f)=g,
$$
which has attracted considerable attention in recent years due to its deep connections with the volume functional and the static Einstein equation.

More precisely, a Miao-Tam critical metric is a compact Riemannian manifold 
$(\textit{M},g)$ with non-empty boundary 
$\partial\textit{M}$ that admits a non-constant solution $f$ of the equation. In analogy with the classical static case, the potential function $f$ is required to vanish precisely on the boundary and to be strictly positive in the interior of $\textit{M}.$ Such metrics were first investigated by Miao and Tam in \cite{Miao2009}, where they were characterized as critical points of the volume functional under suitable boundary constraints.

In 2011, Miao and Tam raised the question of whether there exist Miao-Tam critical metrics with non-constant sectional curvature on a compact manifold whose boundary is isometric to a standard round sphere (see \cite{Miao2011}). As a partial answer, they showed that any Einstein Miao-Tam critical metric $(M,g,f)$ must be isometric to a geodesic ball in a simply connected space form ${\mathbb R}^n,{\mathbb H}^n,$
or ${\mathbb S}^n$, without requiring any boundary restriction. Subsequently, Baltazar and Ribeiro Jr. in \cite{baltazar2018} extended this classification to the setting of parallel Ricci tensor, obtaining the same rigidity conclusion.

\subsection{Cyclic parallel Ricci tensor}
Before proceeding, it is necessary to recall the following 3-tensor defined in terms of the Ricci tensor:
\begin{equation}
D(X,Y,Z)=\nabla_{X}\operatorname{Ric}(Y,Z)+\nabla_{Y}\operatorname{Ric}(Z,X)+\nabla_{Z}\operatorname{Ric}(X,Y),\nonumber
\end{equation}
for any $X,Y,Z\in\Gamma(TM).$ For simplicity, we rewrite the above equation in index notation as
\begin{equation}
    D_{ijk}=\nabla_{i}R_{jk}+\nabla_{j}R_{ki}+\nabla_{k}R_{ij}.
\end{equation}
A Riemannian manifold is said to have cyclic parallel Ricci tensor if $D_{ijk}=0.$ Such concept was introduced by A. Gray in \cite{gray1978einstein} and clearly generalizes the definition of an Einstein manifold. Moreover, it is evident that a parallel Ricci tensor implies the cyclic parallel condition, while the converse is not true in general (see \cite{gray1978einstein} and \cite{jelonek2008tensors} for further details). 

While each of the three classes above has been studied under various curvature assumptions—including  locally conformally flat, harmonic Weyl, and Bach-flat, a systematic treatment under the unifying hypothesis cyclic Ricci parallel  has not yet been fully explored. In 2019, aiming to improve the result of Baltazar\cite{baltazar2018}, Sheng and Wang\cite{sheng2019critical} initiated the study of Miao–Tam critical metrics with cyclic parallel Ricci tensor. They proved that, under the assumption that the sectional curvature is non-positive or that the scalar curvature satisfies the following upper bound
    \begin{equation*}
        R\leq-\sqrt{\frac{(n-1)(n-2)}{2}}|W|-\sqrt{n(n-1)}|\mathring{Ric}|,
    \end{equation*}
where $W$ denotes the Weyl curvature tensor, the metric is necessarily Einstein. Consequently, the space must be isometric to a geodesic ball in a simply connected space form $\mathbb {R}^n$ or $\mathbb {H}^n.$ Shortly afterwards, Baltazar and Silva\cite{baltazar2021static} pioneered the rigidity analysis of more general critical spaces under the cyclic parallel Ricci assumption. They proved that Besse's conjecture \ref{conj1} holds in dimension three for such metrics. However, for dimensions $n\geq4$,they were unable to establish the rigidity theorem due to the difficulty of appropriately controlling the radial Weyl curvature term arising from the Bochner formula. In addition, they obtained a classification of three-dimensional Miao–Tam critical metrics under the cyclic parallel Ricci condition. For higher-dimensional Miao–Tam critical metrics, however, their result also requires the additional assumption of non-positive scalar curvature. Recently, Andrade and Baltazar \cite{andrade2026einstein} classified compact Einstein-type manifolds with cyclic parallel Ricci tensor. Unfortunately, their classification is also restricted to the three-dimensional case.

The aim of this paper is to fill this gap. Our main result shows that, in dimension $n\geq3$, the cyclic parallel condition $D=0$ actually forces the Ricci tensor to be parallel in the three geometric structures introduced above. In particular, for Miao–Tam critical metrics, a stronger conclusion can be obtained: the metric is necessarily Einstein. 

It is instructive to compare the cyclic parallel Ricci condition adopted in the present paper with another geometric assumption that has recently proven useful in the rigidity theory of vacuum static spaces, namely the existence of a non-Killing closed conformal vector field. In a very recent work, Ye\cite{Ye2025} studied closed vacuum static spaces admitting such a vector field $\xi$ and established a complete classification under the assumption that the scalar curvature is $n(n-1)$ and the Cotton tensor satisfies $C(\cdot,\xi,\cdot)=0.$ In that setting, the characteristic function of the conformal vector field serves as a solution to the vacuum static equation, and the rigidity result follows from an identity involving the Cotton tensor. Although both the cyclic parallel Ricci condition and the closed conformal vector field assumption lead to similar classification outcomes, they are conceptually distinct: the former is an intrinsic curvature condition imposed directly on the covariant derivative of the Ricci tensor, whereas the latter is an extrinsic symmetry-type condition on the metric. Consequently, our results are independent of and complementary to those obtained under the closed conformal vector field hypothesis.
\subsection{Main results}
We are now in a position to state our main results. In this paper, we always assume that the Riemannian manifold $(M,g)$ is complete and simply connected.

\begin{Thm}\label{thm1}
    Let $(M^{n},g,f)$ be a non-compact gradient shrinking Ricci soliton with cyclic parallel Ricci tensor. Then $(M^{n},g)$ is isometric to a finite quotient of $N^{n-k}\times\mathbb R^k,$ where $N^{n-k}$ is an $(n-k)-$dimensional Einstein maniflod. 
\end{Thm}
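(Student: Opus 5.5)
The plan is to use the soliton identities to show that the scalar curvature is constant and that $\nabla f$ lies in the kernel of $\operatorname{Ric}$. Next I would prove $\nabla \operatorname{Ric}=0$, and then finish with a de Rham splitting argument. I use the standard identities for a gradient shrinking soliton \eqref{Eqn1}:
\begin{equation*}
\nabla_i R = 2R_{ij}\nabla_j f,\qquad \nabla_j R_{ij}=\tfrac12\nabla_i R,\qquad \nabla_iR_{jk}-\nabla_jR_{ik}=R_{ijkl}\nabla_l f ,
\end{equation*}
together with $\Delta_f \operatorname{Ric}=2\lambda \operatorname{Ric}-2\,Rm*\operatorname{Ric}$ (up to sign conventions), where $\Delta_f=\Delta-\nabla_{\nabla f}$.

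\textbf{Step 1.} I trace $D_{ijk}=0$ over $j,k$. This gives $\nabla_i R+2\nabla_jR_{ij}=2\nabla_iR=0$, so $R$ is constant. Hence $\operatorname{Ric}(\nabla f)=0$.

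\textbf{Step 2.} Differentiating $R_{jk}f_k=0$ and using $f_{ki}=\lambda g_{ki}-R_{ki}$ gives
\begin{equation*}
\nabla_iR_{jk}\,f_k=(\operatorname{Ric}^2-\lambda \operatorname{Ric})_{ij}=:Q_{ij}.
\end{equation*}
Combining this with $D=0$ gives $\nabla_{\nabla f}\operatorname{Ric}=-2Q$.

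\textbf{Step 3.} Writing $3\nabla_iR_{jk}=D_{ijk}+(\nabla_iR_{jk}-\nabla_jR_{ki})+(\nabla_iR_{jk}-\nabla_kR_{ij})$ and using the Codazzi defect identity expresses $\nabla\operatorname{Ric}$ purely as curvature contracted with $\nabla f$:
\begin{equation*}
3\nabla_iR_{jk}=(R_{ijkl}+R_{ikjl})\nabla_l f .
\end{equation*}
Contracting this with $\nabla f$ shows that the radial curvature operator $R(\cdot,\nabla f,\nabla f,\cdot)$ is a fixed multiple of $Q$. Taking the trace and using $\operatorname{Ric}(\nabla f,\nabla f)=0$ gives $\operatorname{tr}Q=|\operatorname{Ric}|^2-\lambda R=0$.

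\textbf{Step 4 (main obstacle).} The key step is to upgrade $\operatorname{tr}Q=0$ to $Q\equiv 0$, or equivalently to radial flatness $R(\cdot,\nabla f)\nabla f=0$. I would first try a weighted integral identity. Integrate $\frac12\Delta_f|\operatorname{Ric}|^2=|\nabla \operatorname{Ric}|^2+\langle \operatorname{Ric},\Delta_f\operatorname{Ric}\rangle$ against $e^{-f}$ with a cutoff. Then substitute the Step 3 expression for $\nabla\operatorname{Ric}$ and the relation $\nabla_{\nabla f}\operatorname{Ric}=-2Q$. The goal is to reach an identity of the form
\begin{equation*}
\int_M |\nabla\operatorname{Ric}|^2e^{-f}=-c\int_M |Q|^2e^{-f},\qquad c>0,
\end{equation*}
or at least one with a sign forcing both sides to vanish.

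Justifying the integration by parts on the non-compact $M$ needs growth control. I would use the Cao--Zhou quadratic growth of $f$ and Euclidean volume growth. Since $R$ is constant and $\operatorname{tr}Q=0$ gives $|\operatorname{Ric}|^2=\lambda R$, $\operatorname{Ric}$ is bounded. Munteanu--Wang-type estimates then give polynomial growth of $|Rm|$, which makes all boundary terms vanish against $e^{-f}$. If the sign bookkeeping fails, a fallback is to reduce to the Petersen--Wylie criterion: a shrinker with constant scalar curvature that is radially flat is rigid. Radial flatness would follow once $Q=0$.

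\textbf{Step 5.} Once $\nabla\operatorname{Ric}=0$, $M$ is simply connected with a parallel symmetric tensor. De Rham decomposition then splits $M$ into a product of Einstein factors over the distinct constant eigenvalues of $\operatorname{Ric}$. Restricting $\nabla^2f=\lambda g-\operatorname{Ric}$ to each factor makes $f$ a soliton potential there. On a factor with eigenvalue $\mu\neq\lambda$ the potential has Hessian $(\lambda-\mu)g$. If $\mu\neq 0$, this contradicts $\operatorname{Ric}(\nabla f)=0$ unless $f$ is constant on that factor, which forces $\mu=\lambda$. The factor with $\mu=0$ is a flat Gaussian soliton, hence $\mathbb R^k$.

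Thus $M=N^{n-k}\times\mathbb R^k$ with $N$ Einstein of constant $\lambda$, and non-compactness forces $k\ge1$ or $N$ non-compact. A non-compact $N$ is impossible by Myers' theorem since $\lambda>0$. This yields the statement, and passing to quotients removes the simple connectivity assumption.
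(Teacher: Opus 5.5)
Steps 1--3 and Step 5 are correct. Your observation that $|\operatorname{Ric}|^2=\lambda R$ is constant, so $\operatorname{Ric}$ is bounded, is also a cleaner growth input than the Munteanu--Sesum $L^2$ bound the paper uses.

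The gap is Step 4, which carries the entire content of the theorem. You announce a target identity $\int_M|\nabla\operatorname{Ric}|^2e^{-f}=-c\int_M|Q|^2e^{-f}$ but never derive it. Your fallback through the Petersen--Wylie radial-flatness criterion presupposes $Q\equiv0$, which is exactly what is left unproved. The manipulations you list cannot produce such a sign on their own. Since $|\operatorname{Ric}|^2$ is constant, the weighted Bochner formula collapses to the pointwise identity $|\nabla\operatorname{Ric}|^2=2\bigl(R_{ijkl}R_{ik}R_{jl}-\lambda|\operatorname{Ric}|^2\bigr)$ (in the paper's conventions). Inserting $\nabla_{\nabla f}\operatorname{Ric}=-2Q$ or the Step 3 formula into it leaves the indefinite cubic term $R_{ijkl}R_{ik}R_{jl}$ untouched. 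Nothing in Steps 1--3 controls that term pointwise.

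The missing idea is to evaluate the same cubic term a second time, with a different multiple of $|\nabla\operatorname{Ric}|^2$ in front. Contracting your Step 3 formula with $\nabla_iR_{jk}$ gives $3|\nabla\operatorname{Ric}|^2=|R_{ijkl}\nabla_lf|^2=|{\rm div}Rm|^2$; this is the paper's $|{\rm div}Rm|^2+\tfrac13|D|^2=3|\nabla\operatorname{Ric}|^2$ with $D=0$. Write $|R_{ijkl}\nabla_lf|^2=2R_{ijkl}\nabla_lf\,\nabla_jR_{ik}$ and integrate by parts against $e^{-f}\phi^2$, using $\nabla_j(R_{ijkl}e^{-f})=0$ and $\nabla^2f=\lambda g-\operatorname{Ric}$. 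This gives
\[ 3\int_M|\nabla\operatorname{Ric}|^2e^{-f}\phi^2=2\int_M\bigl(R_{ijkl}R_{ik}R_{jl}-\lambda|\operatorname{Ric}|^2\bigr)e^{-f}\phi^2-2\int_M R_{ijkl}\nabla_lf\,R_{ik}\,e^{-f}\nabla_j\phi^2 . \]
Subtracting the integrated Bochner identity leaves $2\int_M|\nabla\operatorname{Ric}|^2e^{-f}\phi^2$ equal to the cutoff term. Cauchy--Schwarz together with $|R_{ijkl}\nabla_lf|=\sqrt3|\nabla\operatorname{Ric}|$ then yields $\int_M|\nabla\operatorname{Ric}|^2e^{-f}\phi^2\le12\int_M|\operatorname{Ric}|^2e^{-f}|\nabla\phi|^2$. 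The right side tends to $0$ because $\operatorname{Ric}$ is bounded and $\int_Me^{-f}<\infty$.

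This is the paper's mechanism: Proposition 3 combined with $|{\rm div}Rm|^2=3|\nabla\operatorname{Ric}|^2$. It gives $\nabla\operatorname{Ric}=0$ directly, so $Q=-\tfrac12\nabla_{\nabla f}\operatorname{Ric}=0$ is only a by-product. No growth bound on $|Rm|$ is needed, since the cubic terms cancel before the cutoff is removed. With this step supplied, your Step 5 is a valid hands-on replacement for the paper's citation of Petersen--Wylie.
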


\begin{Thm}\label{thm2}
    Let $(M^{n},g,u)$ be a non-trivial, compact  $(m > 1)$-quasi-Einstein manifold with smooth boundary $\partial M$ and cyclic parallel Ricci tensor. Then $(M^{n},g)$ is isometric to  standard hemisphere $\mathbb S^{n}_{+}$ or a quotient of $\mathbb{S}^k_{+}\times N^{n-k},$ where $N^{n-k}$ is an $(n-k)-$dimensional closed Einstein maniflod. 
\end{Thm}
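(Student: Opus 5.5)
The plan is to show that the cyclic parallel condition $D=0$ forces $\nabla \operatorname{Ric}=0$ on the quasi-Einstein manifold, and then to invoke the known rigidity of quasi-Einstein manifolds with harmonic (indeed parallel) curvature, as in He–Petersen–Wylie \cite{he2014warped} and Cao–Li \cite{cao2025quasi}.

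\textbf{Step 1: constancy of the scalar curvature.} Tracing $D_{ijk}=0$ over $j,k$ gives $\nabla_i R+2\nabla_j R_{ij}=0$. Combined with the contracted Bianchi identity $\nabla_j R_{ij}=\tfrac12\nabla_i R$, this yields $\nabla R=0$, so $R$ is constant and $\operatorname{div}\operatorname{Ric}=0$.

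\textbf{Step 2: a Bochner-type identity for $|\nabla \operatorname{Ric}|^2$.} Write $D=0$ as
\[
\nabla_i R_{jk}=-\nabla_jR_{ki}-\nabla_kR_{ij}.
\]
Contracting with $\nabla_iR_{jk}$ gives $|\nabla\operatorname{Ric}|^2=-2\nabla_iR_{jk}\nabla_jR_{ik}$. Equivalently, with the Cotton-type tensor $C_{ijk}=\nabla_iR_{jk}-\nabla_jR_{ik}$ (no scalar term, since $R$ is constant), we get $|C|^2=2|\nabla\operatorname{Ric}|^2-2\nabla_iR_{jk}\nabla_jR_{ik}=3|\nabla\operatorname{Ric}|^2$.

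Next, from the quasi-Einstein equation $\nabla^2u=\frac um(\operatorname{Ric}-\lambda g)$, commute third derivatives of $u$ to express $C$ in terms of curvature applied to $\nabla u$:
\[
u\,C_{ijk}=m\,R_{ijkl}\nabla_lu+(\text{terms }R_{ik}\nabla_ju-R_{jk}\nabla_iu,\ \text{plus }g\text{-terms in }\nabla u)
\]
(the standard identity, see \cite{case2011rigidity}). I would then integrate $u\,|C|^2$, or an expression of the form $u^{\alpha}\nabla_iR_{jk}C_{ijk}$, over $M$, and integrate by parts. Since $u=0$ on $\partial M$, the boundary terms vanish. Divergence-freeness of $\operatorname{Ric}$ together with $D=0$ then lets one rewrite $\int u\,|\nabla \operatorname{Ric}|^2$ as $\int \nabla_lu\,(\cdots)$. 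Substituting $\nabla^2u$ back in should reduce everything to
\[
c(m,n)\int_M u\,|\nabla\operatorname{Ric}|^2=0,
\]
with $c(m,n)>0$ when $m>1$. This is presumably exactly where the hypothesis $m>1$ enters. The Weyl/radial term $R_{ijkl}\nabla_lu\,\nabla_iR_{jk}$ must be handled here. The idea is that $\nabla_iR_{jk}\nabla_lu\,R_{ijkl}$ can be traded, via $D=0$ and antisymmetry in $i,j$, for $\tfrac12 C_{ijk}\nabla_lu\,R_{ijkl}$, which the identity above turns back into $u|C|^2$ plus lower-order terms. In this way the radial Weyl curvature is absorbed instead of being estimated.

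\textbf{Step 3: the main obstacle} is precisely this absorption of the radial Weyl term, which blocked earlier work in dimension $n\ge4$. My first attempt is to pair $D=0$ with $C$ directly, as sketched above, and obtain an exact cancellation. If a sign-indefinite term remains, I would try weights $u^{\alpha}$ and choose $\alpha$ depending on $m$ to make the coefficient positive.

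\textbf{Step 4: conclusion.} Once $\nabla\operatorname{Ric}=0$ holds on the interior where $u>0$, continuity gives it on all of $M$. The manifold then has harmonic curvature, and the classification of He–Petersen–Wylie (compact with boundary, $u=0$ on $\partial M$) gives the two alternatives. Either $g$ is Einstein, in which case $\nabla^2u=-\frac{\lambda'}{m}u\,g$ with Dirichlet boundary data, and Obata–Reilly type rigidity gives the hemisphere $\mathbb S^n_+$. Or the de Rham splitting of the parallel Ricci tensor (using simple connectivity) gives a product of Einstein factors, with $u$ depending only on the factor carrying $\partial M$. That factor is then a hemisphere $\mathbb S^k_+$, and the other factor is a closed Einstein $N^{n-k}$.
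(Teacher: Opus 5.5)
\textbf{There is a genuine gap in Steps 2--3, which carry the whole theorem.} Steps 1 and 4 are fine. Step 1 makes explicit the constancy of $R$ that the paper uses tacitly, and Step 4 is the paper's appeal to He--Petersen--Wylie. But in Steps 2--3 you only assert that the integrations by parts ``should reduce everything to'' $c(m,n)\int_M u|\nabla\operatorname{Ric}|^2=0$, and then list fallbacks. You never show why the sign-indefinite curvature term disappears. That term is also not really the radial one. Integrating $\int_M u\,\nabla_iR_{jk}\nabla_jR_{ik}$ by parts and commuting $\nabla_j\nabla_iR_{jk}$ produces the zeroth-order term $\int_M u\,R_{ijkl}R_{ik}R_{jl}$, plus cubic Ricci terms. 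This term has no factor of $\nabla u$ and no sign, and nothing in your sketch removes it. Finally, $m>1$ does not enter through the positivity of a final constant, which in the actual argument is positive for every $m$. It is used for the vanishing of boundary terms, while $m\neq 1$ is what makes $\rho$, $P$, $Q$ available.

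\textbf{The missing idea is to evaluate one weighted integral in two ways, so that this term cancels exactly.} Set $P=\operatorname{Ric}-\rho g$ and $Q=Rm+\frac1m P\owedge g+\frac{\rho-\lambda}{2m}g\owedge g$. Constancy of $R$ gives $P(\nabla u)=0$, $|P|^2=(\lambda-\rho)\operatorname{tr}P$, and $\nabla_l(u^{m+1}Q_{ijkl})=0$. It also gives $Q_{ijkl}\nabla_lu=\frac um C_{ijk}$, where $C_{ijk}=\nabla_iP_{jk}-\nabla_jP_{ik}$.

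First compute $\int_M u^m|C|^2$ by writing $|C|^2=\frac{2m}{u}Q_{ijkl}\nabla_lu\,\nabla_iP_{jk}$ and integrating by parts, using $\operatorname{div}(u^{m+1}Q)=0$ and the Hessian equation. This gives $2\int_M Q_{ijkl}P_{ik}P_{jl}u^m$ plus cubic terms. Then compute it again by expanding $|C|^2=2|\nabla P|^2-2\nabla_iP_{jk}\nabla_jP_{ik}$, integrating the cross term by parts and applying the Ricci identity. This gives $2\int_M|\nabla P|^2u^m-2\int_M Q_{ijkl}P_{ik}P_{jl}u^m$ plus cubic terms.

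Adding the two, the $Q$-terms cancel. For the weight $u^m$, the cubic terms in $\operatorname{tr}P^3$ and $(\rho-\lambda)|P|^2$ also cancel, using $P(\nabla u)=0$ and $|P|^2=(\lambda-\rho)\operatorname{tr}P$. What remains is $\int_M u^m|C|^2=\int_M u^m|\nabla P|^2$, which is the paper's identity for $\int_M|{\rm div}Q|^2u^m$. Combined with your pointwise identity $|C|^2=3|\nabla P|^2$, this forces $\nabla P=0$.

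Your idea of turning the radial term back into $u|C|^2$ is the right instinct, but it only closes when organized this way. With another weight $u^\alpha$, such as your $u$, a remainder proportional to $(m-\alpha)\int_M u^\alpha\big(\operatorname{tr}P^3+(\rho-\lambda)|P|^2\big)$ survives. That remainder does vanish under $D=0$. Contracting $D$ with $P_{ik}\nabla_ju$, and using that $|P|^2$ is constant, gives $\nabla_iP_{jk}P_{ik}\nabla_ju=0$. This quantity equals $-\frac um\big(\operatorname{tr}P^3+(\rho-\lambda)|P|^2\big)$. That is a further step your proposal does not supply either.
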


\begin{Thm}\label{thm3}
    Let $(M^n,g,f)$ be a non-trivial closed vacuum static space with cyclic parallel Ricci tensor. Then $(M^{n},g)$ is isometric to either the Euclidean sphere $\mathbb S^n$ or a quotient of $\mathbb{S}^k\times N^{n-k},$ 
    where $N^{n-k}$ is an $(n-k)-$dimensional closed Einstein maniflod.
\end{Thm}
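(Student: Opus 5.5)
We prove Theorem \ref{thm3} by showing that $\nabla \mathrm{Ric}=0$, and then invoking the known structure of vacuum static spaces with parallel Ricci tensor. This is the natural analogue of the approach of Chang--Hwang--Yun for CPE metrics.

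\emph{Step 1 (basic facts).} First recall the standard consequences of the static equation. Tracing $\nabla^2f+\frac{R}{n(n-1)}fg=f\mathring{Ric}$ gives $\Delta f=-\frac{R}{n-1}f$. Taking the divergence and using the contracted Bianchi identity shows that $R$ is constant, so $R>0$ for a non-trivial closed example, and $\nabla R=0$. The Ricci identity then yields the key relation
\begin{equation}
 f\,C_{ijk}=R_{ijkl}\nabla_l f+\text{(Ricci terms)}\cdot\nabla f,\nonumber
\end{equation}
where $C_{ijk}=\nabla_iR_{jk}-\nabla_jR_{ik}$ is the Cotton tensor. Equivalently, $f C_{ijk}$ equals the radial Weyl term $W_{ijkl}\nabla_l f$ plus a tensor $T_{ijk}$ built from $\mathring{Ric}$ and $\nabla f$.

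\emph{Step 2 (use $D=0$).} Write $\nabla_iR_{jk}=\frac13\big(D_{ijk}+C_{ijk}+C_{ikj}\big)$, using that $\nabla R=0$. Under $D=0$ this gives $\nabla \mathrm{Ric}=\frac13(C_{ijk}+C_{ikj})$. Consequently $|\nabla \mathrm{Ric}|^2$ is a fixed multiple of $|C|^2$. In addition, $D=0$ implies $\nabla_X\mathrm{Ric}(X,X)=0$, so $|\mathring{Ric}|^2$ and every trace $\operatorname{tr}(\mathrm{Ric}^p)$ are constant; indeed $\nabla_k R_{ij}R_{ij}=-2\nabla_iR_{jk}R_{ij}$ together with symmetry forces these derivatives to vanish.

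\emph{Step 3 (integral identity).} Consider $\int_M f\,|\nabla\mathrm{Ric}|^2$. Integrating by parts, we get
\begin{equation}
 \int_M f|\nabla \mathrm{Ric}|^2=-\int_M \nabla_i f\,R_{jk}\nabla_iR_{jk}-\int_M f R_{jk}\Delta R_{jk}.\nonumber
\end{equation}
The first term vanishes because $|\mathrm{Ric}|^2$ is constant. For the second, $D=0$ gives $\Delta R_{jk}=-\nabla_i\nabla_jR_{ki}-\nabla_i\nabla_kR_{ij}$. Commuting derivatives and using $\nabla R=0$, this becomes a curvature expression $-2(R_{ij}R_{ik}-R_{ijkl}R_{il})$. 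We then substitute the static equation through $\int f\cdot(\text{curvature})$ by writing $R_{ij}$ terms in terms of $\nabla^2 f$, and integrate by parts once more. The Weyl contributions should appear only in the combination $\int W_{ijkl}\nabla_lf\,(\ldots)$. By Step 1 these can be rewritten as $\int f\,C\cdot(\ldots)$, and by Step 2 as $\int f\,\nabla \mathrm{Ric}\cdot\mathring{Ric}$-type terms, which vanish because the traces of powers of $\mathrm{Ric}$ are constant. The target identity is
\begin{equation}
 \int_M f\,|\nabla\mathrm{Ric}|^2=0,\quad\text{or better}\quad \int_M |\nabla \mathrm{Ric}|^2 \le 0 .\nonumber
\end{equation}
The sign of $f$ is a problem here, since $f$ changes sign on closed static spaces. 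For that reason we aim for an identity weighted by $1$ or by $|\nabla f|^2$ rather than by $f$. One option is to use $\int f\,C_{ijk}\nabla_if\,R_{jk}$-type expressions, combined with $\operatorname{div}$ of $W(\cdot,\cdot,\cdot,\nabla f)$ vanishing modulo $C$.

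\emph{Step 4 (conclusion).} Once $\nabla\mathrm{Ric}=0$, the de Rham decomposition of the simply connected $M$ splits it into Einstein factors. The static equation forces $f$ to be non-constant on exactly one factor, which is then a static Einstein factor and hence a round sphere by Obata's theorem. Constant positive $R$ together with the structure of $\nabla^2 f$ leaves only two possibilities: $\mathbb S^n$, or $\mathbb S^k\times N^{n-k}$ with $N$ Einstein, which also recovers the eigenvalue matching.

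The main obstacle is Step 3, namely controlling the radial Weyl curvature term $W_{ijkl}\nabla_lf$ without any sign on $f$. The decisive point is that $D=0$ turns this term into the Cotton tensor, whose contractions against $\mathrm{Ric}$ vanish because $|\mathrm{Ric}|^2$ is constant.
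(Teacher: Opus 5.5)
\textbf{There is a genuine gap in Steps 2--3, which is exactly where you locate the difficulty.}

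\emph{Step 2.} The claim that $D=0$ makes $|\mathrm{Ric}|^2$ and every $\operatorname{tr}(\mathrm{Ric}^p)$ constant is unjustified. The identity $R_{ij}\nabla_kR_{ij}=-2R_{ij}\nabla_iR_{jk}$ relates two \emph{different} contractions. Since $\operatorname{div}\mathrm{Ric}=0$, it only says $\operatorname{div}(\mathrm{Ric}^2)=-\frac14\nabla|\mathrm{Ric}|^2$, and no symmetry forces either side to vanish. Your reasoning uses only that $\mathrm{Ric}$ is a divergence-free Killing $2$-tensor with constant trace, and such tensors need not have constant norm. On $\mathbb{R}^4$, with $\rho_{ab}=x_a\partial_b-x_b\partial_a$, the tensor
$K=\rho_{12}\otimes\rho_{12}+\rho_{34}\otimes\rho_{34}-\rho_{13}\otimes\rho_{13}-\rho_{24}\otimes\rho_{24}$
is Killing, trace-free and divergence-free. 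Yet $|K|^2$ equals $0$ at the origin and $2$ at $(1,0,0,0)$. Constancy of $|\mathrm{Ric}|^2$ is only available a posteriori, once $\nabla\mathrm{Ric}=0$ is known.

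\emph{Step 3.} Even granting Step 2, Step 3 does not close. Its first term does vanish, but for a different reason: it equals $-\frac{R}{2(n-1)}\int_M f|\mathrm{Ric}|^2=0$, because $\int_M f=0$ and $\int_M f|\mathring{Ric}|^2=0$. The real problem is the Weyl contribution, which is not lower order. Integrating $\int_M fW_{ijkl}\mathring{R}_{ik}\mathring{R}_{jl}$ by parts against $f\mathring{R}_{jl}=\nabla_j\nabla_lf+\frac{R}{n(n-1)}fg_{jl}$ produces the term $\int_M W_{ijkl}\nabla_lf\,\nabla_j\mathring{R}_{ik}=-\frac12\int_M\langle\iota_{\nabla f}W,C\rangle$. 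Substituting your own relation $\iota_{\nabla f}W=fC-T$ turns this into $-\frac12\int_M f|C|^2=-\frac32\int_M f|\nabla\mathrm{Ric}|^2$, plus $\langle T,C\rangle$-terms. That is a principal term, not a $\nabla\mathrm{Ric}\cdot\mathring{Ric}$ term that could vanish by constancy of traces. So with weight $f$ the best you can reach is $\int_M f|\nabla\mathrm{Ric}|^2=0$. As you observe, this is useless because $f$ changes sign, and the alternative weights $1$ or $|\nabla f|^2$ are not developed. Hence $\nabla\mathrm{Ric}=0$ is never established.

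\textbf{How the paper gets around both problems.} It weights by $f^2\ge 0$ and uses $S=Rm+\mathrm{Ric}\owedge g-\frac{R}{2(n-1)}g\owedge g$. This tensor satisfies $S_{ijkl}\nabla_lf=fC_{ijk}+(R_{jl}g_{ik}-R_{il}g_{jk})\nabla_lf$ and $\nabla_lS_{ijkl}=-2C_{ijk}$. The paper computes $\int_M|\operatorname{div}S|^2f^2$ twice: once via this radial identity and an integration by parts, once by expanding and applying the Ricci identity. Adding the two, the terms $\int_M S_{ijkl}\mathring{R}_{ik}\mathring{R}_{jl}f^2$ cancel exactly; these are where the Weyl tensor sits, so $\langle\iota_{\nabla f}W,C\rangle$ never has to be isolated. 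The remaining cubic and $|\mathring{Ric}(\nabla f)|^2$ terms are reassembled via Lemmas~\ref{lemm3} and~\ref{lemm4}, giving Proposition~\ref{prop5}:
\[
\int_M|\operatorname{div}S|^2f^2=4\int_M|\nabla\mathring{Ric}|^2f^2+8\int_M\nabla_i\mathring{R}_{jk}\mathring{R}_{ik}\nabla_jf\,f+2\int_M\langle\nabla|\mathring{Ric}|^2,\nabla f\rangle f .
\]
Two consequences of $D=0$ then finish the argument:
\begin{itemize}
\item The pointwise contraction $D_{ijk}\mathring{R}_{ik}\nabla_jf=0$, i.e.\ $\langle\nabla|\mathring{Ric}|^2,\nabla f\rangle=-4\nabla_i\mathring{R}_{jk}\mathring{R}_{ik}\nabla_jf$, kills both cross terms. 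This is the correct, weaker substitute for your constancy claim.
\item $|D|^2=0$ gives $|\operatorname{div}S|^2=4|C|^2=12|\nabla\mathrm{Ric}|^2$.
\end{itemize}
Together these yield $8\int_M|\nabla\mathrm{Ric}|^2f^2=0$. Since the nodal set of $f$ has empty interior, $\nabla\mathrm{Ric}=0$.

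\textbf{Step 4.} Your de Rham splitting plus Obata argument is a reasonable alternative to the paper's appeal to Li's classification of closed vacuum static spaces with harmonic curvature. However, it rests on the missing Step 3.
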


\begin{Thm}\label{thm4}
     Let $(M^{n},g,f)$ be a non-trivial closed CPE metric with cyclic parallel Ricci tensor and scalar curvature $n(n-1).$ Then $(M^{n},g)$ is isometric to Euclidean sphere $\mathbb S^{n}(1).$
\end{Thm}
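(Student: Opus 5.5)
Since a CPE metric has constant scalar curvature, I will first use the cyclic parallel condition to show that $\mathring{Ric}$ is a Codazzi tensor, and then show that it vanishes. Tracing $D_{ijk}=0$ over $j,k$ gives $\nabla_i R+2\nabla^k R_{ki}=0$. Combined with the contracted Bianchi identity $2\nabla^kR_{ki}=\nabla_iR$, this yields $\nabla R=0$, which holds for CPE metrics in any case. The CPE equation with $(a,b)=(1,0)$ and $R=n(n-1)$ reads
\begin{equation*}
\nabla^2 f = (f+1)\mathring{Ric} - fg .
\end{equation*}
Tracing gives $\Delta f=-nf$. The key structural step is then to derive, from the Ricci identity applied to $\nabla^2 f$, the usual identity
\begin{equation*}
(f+1)C_{ijk} = R_{ijkl}\nabla^l f + \text{(Ricci terms)}\cdot\nabla f = T_{ijk}+ W_{ijkl}\nabla^l f,
\end{equation*}
where $C$ is the Cotton tensor, $C_{ijk}=\nabla_iR_{jk}-\nabla_jR_{ik}$ since $R$ is constant, and $T$ is the standard 3-tensor built from $\mathring{Ric}$ and $df$.

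Next I will use $D=0$ in an integral identity. Skew-symmetrization shows $|\nabla Ric|^2=\nabla_iR_{jk}\nabla_iR_{jk}$ and $D=0$ gives $\nabla_iR_{jk}\nabla_jR_{ik}=-\tfrac12|\nabla Ric|^2$, so $|C|^2=3|\nabla Ric|^2$. Integrating $(f+1)|C|^2$, or better $(f+1)\langle C,\nabla\mathring{Ric}\rangle$, and integrating by parts using $\operatorname{div}\mathring{Ric}=0$ and $\Delta f=-nf$, I aim to write
\begin{equation*}
\int_M (f+1)|\nabla Ric|^2 = -\int_M \mathring{R}_{jk}\,\nabla_i(f+1)\,(\ldots)
\end{equation*}
and then exploit the cyclic identity to cancel the radial Weyl term $W(\cdot,\cdot,\cdot,\nabla f)$. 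Concretely, contracting $W_{ijkl}\nabla^lf$ against $\nabla_iR_{jk}$ and using $D=0$ symmetrizes $\nabla_iR_{jk}$ in a way incompatible with the antisymmetry of $W$ in $i,j$, giving $W_{ijkl}\nabla^lf\,\nabla^iR^{jk}=\tfrac12 W_{ijkl}\nabla^lf\,C^{ijk}$. This term must then be absorbed. This is the step Baltazar–Silva could not handle, and I expect it to be the main obstacle. My first attempt will be to integrate $\operatorname{div}\big(W(\cdot,\cdot,\cdot,\nabla f)\cdot\mathring{Ric}\big)$ and use that $\operatorname{div}W$ is a multiple of $C$. This trades the Weyl term for $|C|^2$ and $\mathring{Ric}$-terms, leading to an identity of the form
\begin{equation*}
\int_M (f+1)|\nabla\mathring{Ric}|^2 + c_n\int_M f\,|\mathring{Ric}|^2\cdot(\text{positive}) = 0 .
\end{equation*}

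Finally I will use the standard CPE facts: $\int_M \mathring{Ric}(\nabla f,\nabla f)\ge0$-type identities, and the fact that $\{f=-1\}$ has measure zero, with $f>-1$ when $R=n(n-1)$ (Hwang's result). These give $\nabla \mathring{Ric}=0$, i.e.\ parallel Ricci tensor. Then the Chang--Hwang--Yun theorem for parallel Ricci tensor \cite{chang2012critical} gives Einstein. Alternatively, I can argue directly: $\mathring{Ric}$ is parallel with zero divergence, so $\nabla^2f$ has constant eigenvalue splitting, and the de Rham decomposition combined with the CPE equation forces $\mathring{Ric}=0$. Then $\nabla^2 f=-fg$ with $f$ nonconstant, and Obata's theorem gives $M\cong\mathbb S^n(1)$.
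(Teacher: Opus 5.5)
There is a genuine gap, and it sits at the step you yourself flag as the main obstacle.

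First, the decisive computation is never carried out. The ``first attempt'' with $\operatorname{div}\bigl(W(\cdot,\cdot,\cdot,\nabla f)\cdot\mathring{Ric}\bigr)$ is not computed. The target identity also contains an unspecified ``positive'' factor multiplying $f$, and $f$ changes sign ($\int_M f=0$ because $\Delta f=-nf$). Moreover, $W_{ijkl}\nabla_l f\,\nabla_iR_{jk}=\tfrac12W_{ijkl}\nabla_lf\,C_{ijk}$ is just the antisymmetry of $W$ in $i,j$. The condition $D=0$ plays no role there, so nothing has actually been absorbed.

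More seriously, your conclusion needs a sign on the weight $f+1$, and that sign is not available; invoking it is circular. Pairing $\nabla^2f=(f+1)\mathring{Ric}-fg$ with $\mathring{Ric}$ and using $\operatorname{div}\mathring{Ric}=0$ gives $\int_M(f+1)|\mathring{Ric}|^2=0$. So $f>-1$ alone would already force $\mathring{Ric}\equiv0$, without using $D=0$ at all. (Hwang's theorem takes $f\ge-1$ as a hypothesis; it is not a general property of CPE metrics.) Finally, the ``standard fact'' you quote has the opposite sign: $\int_M\mathring{Ric}(\nabla f,\nabla f)=-\int_M(f+1)^2|\mathring{Ric}|^2\le0$.

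The missing idea is to use the nonnegative weight $(f+1)^2$ and compute $\int_M(f+1)^2|C|^2$ in two ways, with $C_{ijk}=\nabla_iR_{jk}-\nabla_jR_{ik}$. The paper introduces $S=Rm+Ric\owedge g-\frac{R}{2(n-1)}g\owedge g$, which satisfies $\nabla_lS_{ijkl}=-2C_{ijk}$ and $S_{ijkl}\nabla_lf=(f+1)C_{ijk}+(R_{jl}g_{ik}-R_{il}g_{jk})\nabla_lf$.

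\begin{itemize}
\item In the first computation, replace one factor $(f+1)C$ by $S(\nabla f)$, integrate $\nabla_j$ off $\mathring R_{ik}$, and feed the radial identity back in.
\item In the second, expand $|C|^2$ and commute derivatives in $\nabla_iR_{jk}\nabla_jR_{ik}$.
\end{itemize}

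The two computations produce the quartic term $\int_M(f+1)^2S_{ijkl}\mathring R_{ik}\mathring R_{jl}$ with opposite signs. This term carries the entire Weyl contribution, and it cancels identically, so $W(\nabla f)$ never has to be estimated. Using Lemmas~\ref{lemm2}--\ref{lemm4}, what remains (Proposition~\ref{prop5}) is equivalent to
$\int_M(f+1)^2\bigl(|C|^2-|\nabla Ric|^2\bigr)=\int_M(f+1)D_{ijk}\mathring R_{ik}\nabla_jf$.

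With $D=0$ and your correct observation $|C|^2=3|\nabla Ric|^2$, this gives $\int_M(f+1)^2|\nabla Ric|^2=0$. Hence $\nabla Ric=0$, because $\{f=-1\}$ has empty interior (on it $\nabla^2f=g\neq0$). Citing Chang--Hwang--Yun then finishes, as in your last step.

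The square in the weight is essential. The same two-way computation with weight $f+1$ closes up to $\int_M(f+1)\bigl(|C|^2-|\nabla Ric|^2\bigr)=0$. Under $D=0$ that yields only $\int_M(f+1)|\nabla Ric|^2=0$, which is inconclusive for the sign reason above.
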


\begin{Thm}\label{thm5}
     Let $(M^n,g,f)$ be a compact, oriented, connected Miao-Tam critical metric with smooth boundary $\partial M$ and cyclic parallel Ricci tensor. Then $(M^{n},g)$ is isometric to a geodesic ball in a simply connected space form $\mathbb{R}^{n}$, $\mathbb{H}^{n}$ or $\mathbb{S}^{n}.$
\end{Thm}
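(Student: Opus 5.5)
The plan is to show that the cyclic parallel condition forces $\nabla \operatorname{Ric}=0$ on a Miao--Tam critical metric. Once that holds, the rigidity result of Baltazar and Ribeiro Jr. \cite{baltazar2018} for Miao--Tam critical metrics with parallel Ricci tensor applies. It gives that $(M^n,g)$ is Einstein, and the Miao--Tam classification of Einstein critical metrics \cite{Miao2011} then yields a geodesic ball in $\mathbb R^n$, $\mathbb H^n$ or $\mathbb S^n$.

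I will use the standard facts on Miao--Tam critical metrics: $R$ is constant, $f>0$ in the interior, $f=0$ and $\nabla f=-|\nabla f|\nu$ on $\partial M$. Tracing the equation gives
\begin{equation*}
\Delta f=-\frac{Rf+n}{n-1},\qquad \nabla^2 f=f\mathring{Ric}+\Big(\frac{Rf}{n}-\frac{Rf+n}{n-1}+1\Big)g .
\end{equation*}

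First, the purely algebraic consequences of $D_{ijk}=0$ with $R$ constant. Contracting $D$ gives $\nabla_i R_{ij}=\frac12\nabla_j R=0$. From $\nabla_iR_{jk}=-\nabla_jR_{ki}-\nabla_kR_{ij}$ one obtains $\nabla_iR_{jk}\nabla_jR_{ik}=-\frac12|\nabla \operatorname{Ric}|^2$. Hence the Cotton tensor $C_{ijk}=\nabla_iR_{jk}-\nabla_jR_{ik}$ satisfies $|C|^2=3|\nabla\operatorname{Ric}|^2$. So it suffices to prove $\int_M f^{\alpha}|C|^2=0$ for some power $\alpha\ge 1$; since $f>0$ in the interior, this forces $\nabla\operatorname{Ric}=0$.

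Second, I relate $C$ to $f$. Commuting covariant derivatives of $\nabla^2 f=f\mathring{Ric}+\psi(f)g$ (with $\psi$ the affine function above) gives
\begin{equation*}
R_{ijkl}\nabla_l f=f\,C_{ijk}+\nabla_if\,\mathring R_{jk}-\nabla_jf\,\mathring R_{ik}+\psi'(f)\big(\nabla_if\,g_{jk}-\nabla_jf\,g_{ik}\big).
\end{equation*}
Contracting this with $C_{ijk}$ and using that $C$ is trace-free and skew in $i,j$, I obtain
\begin{equation*}
f|C|^2=R_{ijkl}\nabla_lf\,C_{ijk}-2\,C_{ijk}\nabla_if\,\mathring R_{jk}.
\end{equation*}
The cyclic identity now enters in two ways. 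It rewrites $C_{ijk}=2\nabla_iR_{jk}+\nabla_kR_{ij}$. It also makes $\nabla\operatorname{Ric}$ orthogonal to the totally symmetric part, which turns $C_{ijk}\nabla_i f\,\mathring R_{jk}$ into a divergence $\nabla_i\big(\nabla_if\,|\mathring{Ric}|^2\big)$-type term plus a term in $\nabla_if\nabla_jf$. Multiplying by $f$ and integrating, every boundary term carries a factor $f$ and vanishes on $\partial M$. The Laplacian of $f$ is replaced using the trace equation, and $\nabla^2 f$ using the structure equation.

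The main obstacle is the curvature term $R_{ijkl}\nabla_lf\,C_{ijk}$, which is exactly the radial-Weyl-type term that blocked \cite{baltazar2021static} in dimensions $n\ge4$. My first attempt is to substitute the commutation formula for $R_{ijkl}\nabla_lf$ back into this term, so that it becomes $f|C|^2$ plus terms already handled, rather than estimating it through the Weyl tensor. Combining with the cyclic identity $\nabla_iR_{jk}\nabla_jR_{ik}=-\frac12|\nabla\operatorname{Ric}|^2$, I hope the coefficients produce an identity
\begin{equation*}
c_n\int_M f^2|\nabla\operatorname{Ric}|^2=0,\qquad c_n>0 .
\end{equation*}
If a residual term $\int f\,\mathring{Ric}(\nabla f,\nabla f)$ survives, I would try to eliminate it using the divergence identity $\operatorname{div}(\mathring{Ric}(\nabla f))=f|\mathring{Ric}|^2$ together with its weighted version $\operatorname{div}(f\mathring{Ric}(\nabla f))$.
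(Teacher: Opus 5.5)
Your reduction to $\nabla\operatorname{Ric}=0$ (followed by Baltazar--Ribeiro and Miao--Tam) is legitimate, and your algebra is right. Under $D=0$ one has $\nabla_iR_{jk}\nabla_jR_{ik}=-\frac12|\nabla\operatorname{Ric}|^2$, hence $|C|^2=3|\nabla\operatorname{Ric}|^2$, and the identity $f|C|^2=R_{ijkl}\nabla_lf\,C_{ijk}-2C_{ijk}\nabla_if\,\mathring R_{jk}$ holds.

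The gap is exactly at the step you call the main obstacle. You obtained that identity by inserting the commutation formula for $R_{ijkl}\nabla_lf$ and contracting with $C$. Substituting the same formula back into $R_{ijkl}\nabla_lf\,C_{ijk}$ just returns $f|C|^2+2C_{ijk}\nabla_if\,\mathring R_{jk}$, which gives the tautology $f|C|^2=f|C|^2$. Nothing else in your plan controls the Weyl part of $Rm(\cdot,\cdot,\cdot,\nabla f)$. So the identity $c_n\int_Mf^2|\nabla\operatorname{Ric}|^2=0$ is hoped for, not derived. The radial term has to be removed by an integral argument, not a pointwise substitution.

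The missing idea is a pair of integrations by parts that produce the same quadratic curvature term with opposite signs.

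(i) Write $fR_{ijkl}\nabla_lf\,C_{ijk}=2fR_{ijkl}\nabla_lf\,\nabla_iR_{jk}$ and move $\nabla_i$ onto $fR_{ijkl}\nabla_lf$; boundary terms vanish since $f=0$ on $\partial M$. The contracted Bianchi identity turns $\nabla_iR_{ijkl}$ into $C$-terms. The term where the derivative hits the weight is radial again, and here your commutation formula can be used non-circularly. Finally, $\nabla_i\nabla_lf=f\mathring R_{il}+\psi g_{il}$ produces $2\int_Mf^2R_{ijkl}\mathring R_{ik}\mathring R_{jl}$.

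(ii) Do not use $\nabla_iR_{jk}\nabla_jR_{ik}=-\frac12|\nabla\operatorname{Ric}|^2$ yet. Write $|C|^2=2|\nabla\operatorname{Ric}|^2-2\nabla_iR_{jk}\nabla_jR_{ik}$, integrate the cross term by parts against $f^2$, and commute derivatives with the Ricci identity. This produces $-2\int_Mf^2R_{ijkl}\mathring R_{ik}\mathring R_{jl}$.

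Adding the two expressions for $\int_Mf^2|C|^2$ cancels the curvature term. The paper packages this with $S=Rm+\operatorname{Ric}\owedge g-\frac{R}{2(n-1)}g\owedge g$, for which $\nabla_lS_{ijkl}=-2C_{ijk}$ and $S_{ijkl}\nabla_lf=fC_{ijk}+(R_{jl}g_{ik}-R_{il}g_{jk})\nabla_lf$. It obtains
\begin{equation*}
\int_M|{\rm div}S|^2f^2=4\int_M|\nabla\mathring{Ric}|^2f^2+8\int_M\nabla_i\mathring R_{jk}\mathring R_{ik}\nabla_jf\,f+2\int_M\langle\nabla|\mathring{Ric}|^2,\nabla f\rangle f+4nb\int_M|\mathring{Ric}|^2f.
\end{equation*}
Only then is $D=0$ used pointwise. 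It gives $\langle\nabla|\mathring{Ric}|^2,\nabla f\rangle=-4\nabla_i\mathring R_{jk}\mathring R_{ik}\nabla_jf$, so the two first-order terms cancel, and it gives $|{\rm div}S|^2=4|C|^2=12|\nabla\operatorname{Ric}|^2$. Hence $2\int_M|\nabla\mathring{Ric}|^2f^2-nb\int_M|\mathring{Ric}|^2f=0$ with $b=-\frac{1}{n-1}$.

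Note the second term, which your predicted identity lacks. It is present because $\int_Mf|\mathring{Ric}|^2=\int_{\partial M}\mathring{Ric}(\nabla f,\nu)$ need not vanish. Its favorable sign forces $\mathring{Ric}\equiv0$ directly, so Miao--Tam's Einstein classification applies without passing through the parallel-Ricci result.
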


Our proof method is mainly motivated by the integral identity firstly established in \cite{cao2007compact}, which holds on any closed gradient Ricci soliton:
\begin{equation}
    \int_{\textit{M} }|{\rm div}Rm|^2 e^{-f}=\int_{\textit{M} }|\nabla Ric|^2 e^{-f}.\nonumber
\end{equation}
Our strategy is to establish analogous integral identities for each of the three classes of spaces and to apply the curvature assumptions $D=0$ in a suitable manner, thereby proving that the covariant derivative of the Ricci tensor vanishes.
\subsection{Outline of the article}
    The structure of this article is organized as follows. In Sect. 2, we recall some fundamental notions in Riemannian geometry and collect several basic facts concerning the three Hessian-type equations. In Sect. 3, we establish the corresponding integral identities for the three spaces, which are then used to prove our main classification theorems.
\section{Preliminaries}
         In this section, we introduce some fundamental concepts in Riemannian geometry.
    \subsection{Notations and conventions}
        We start with the set-up of notations and reviewing some basic notions in Riemannian geometry. See   \cite{petersen2006riemannian} as references on Riemannian geometry.

         Let $(M^n, g)$ be a smooth Riemannian manifold of dimension $n\ge3$ with Levi-Civita connection $\nabla$ induced by the metric $g.$ Denote by $TM$ the tangent bundle of $M^n$ and by $\Gamma(TM)$ the space of smooth sections of $TM.$ The Riemann curvature tensor $Rm:\Gamma(TM)\times \Gamma(TM)\times \Gamma(TM)\rightarrow \Gamma(TM)$ is defined by
        $$R(X, Y) Z=\nabla_{X} \nabla_{Y} Z-\nabla_{Y} \nabla_{X} Z-\nabla_{[X, Y]} Z,$$
        for any smooth vector fields $X, Y,Z\in\Gamma(TM).$

    At the origin point of the local normal coordinate system, we denote by
    \begin{equation}
        Rm=\{R_{ijkl}\}, \quad Ric=\{R_{ij}\}, \quad R\nonumber
    \end{equation}
    the Riemann curvature tensor, the Ricci tentor, the scalar curvature of the metric $g,$ respectively.
By the once contracted second Bianchi identiy, we can see the divergence of $Rm$
    \begin{equation}\label{divR}
        ({\rm div}Rm)_{ijk}=\nabla_{l}R_{ijkl}=\nabla_{j}R_{ik}-\nabla_{i}R_{jk}.
    \end{equation}
    The Schouten tensor $ {\rm A}$ of $g$ is defined by
        \begin{equation*}
              {\rm A}=Ric-\frac{ R}{2(n-1)}\,g.
        \end{equation*}
The Weyl tensor $\rm{W}$ is given by
                \begin{equation*}\label{decompofR}
                        W_{ijkl}=R_{i j k l} - \frac{1}{n-2}({\rm A}\owedge  g)_{ijkl},
                \end{equation*}
where the symbol $\owedge$ represents the Kulkarni-Nomizu product which is defined for any two symmetric $(0,2)-$tensors $U$ and $V$ as follows
        $$(U \owedge V)_{i j k l}=U_{ik}V_{jl}+U_{jl}V_{ik}-U_{il}V_{jk}-U_{jk}V_{il}.$$
Through the Schouten tensor, we can define the Cotton tensor $ {\rm C}$ as follows
                \begin{eqnarray}\label{Cotton}
                        C_{ijk}=\nabla_{i}A_{jk}-\nabla_{j}A_{ik}.\nonumber
                \end{eqnarray}

\subsection{Gradient Ricci solitons}
    Recall that a complete Riemannian manifold $(M^n, g)$ is called a  gradient Ricci soliton if there exists a smooth function $f$ on $M^n$ such that the Ricci tensor $Ric=\{R_{ij}\}$ of the metric $g=\{g_{ij}\}$ satisfies the equation $Ric +\nabla^2 f=\lambda g$  for some $\lambda \in{\mathbb R}$, or in local normal coordinates, 
    \begin{equation}\label{eqn1}
        R_{ij}+\nabla_i\nabla_jf=\lambda g_{ij}.
    \end{equation}
    Here, $\nabla ^2 f=\{\nabla_i\nabla_j f\}$ denotes the Hessian of $f$. The Ricci soliton $(M^n, g, f)$ is said to be  
     shrinking, or  steady, or   expanding if $\lambda>0$, or $\lambda=0$, or $\lambda<0$, respectively. We now collect several well-known differential identities for gradient Ricci soliton:
     \begin{equation}\label{div Ric}
         \nabla_{j}R_{ij}=\frac{1}{2}\nabla_{i}R=R_{ij}\nabla_{j}f,
     \end{equation}
     and
     \begin{equation}\label{ridial1}
         R_{ijkl}\nabla_{l}f=\nabla_{j}R_{ik}-\nabla_{i}R_{jk}.
     \end{equation}
     Hence, by \eqref{divR}, \eqref{div Ric} and \eqref{ridial1} we have the following two identities
     \begin{equation}\label{2.5}
         \nabla_{l}(R_{ijkl}e^{-f})=0,
     \end{equation}
     and
     \begin{equation}\label{2.6}
         \nabla_{j}(R_{ij}e^{-f})=0.
     \end{equation}
\subsection{Quasi-Einstein manifolds}
    In this subsection, we recall basic facts on $m$-quasi-Einstein manifolds. First of all, we remember that the fundamental equation of an m-quasi- Einstein manifold with smooth boundary $\partial M$ in the tensorial language :
    \begin{equation}\label{eqn2}
        \nabla_{i}\nabla_{j}u=\frac{u}{m}(R_{ij}-\lambda g_{ij}),
    \end{equation}
    where $u>0$ in the interior of $M$ and $u=0$ on the boundary $\partial M$.
    For $m\neq1,$ He-Petersen-Wylie  \cite{he2012} defined the modified scalar curvature and Ricci tensor:
    \begin{equation}
        \rho(x):=\frac{(n-1)\lambda-R}{m-1}
        ~\text{and}~
        P:=Ric-\rho g.\nonumber
    \end{equation}
 By direct calculation, the next formula shows the trace of $P$
    \begin{equation}\label{trP}
        tr(P)=(n-1)\lambda-(m+n-1)\rho.
    \end{equation}
    Analogous to \eqref{div Ric} and \eqref{2.6}, we have
    \begin{equation}\label{13}
        P_{ij}\nabla_{j}u=\frac{u}{2}\nabla_{i}\rho,
    \end{equation}
    and
    \begin{equation}
        \nabla_{j}(u^{m+1}P_{ij})=0.
    \end{equation}
    Next, they define a new algebraic curvature tensor, which satisfies a similar divergence condition and also traces to a multiple of $P$
    \begin{equation*}
        Q:=Rm+\frac{1}{m}P\owedge g+\frac{\rho-\lambda}{2m}g\owedge g.
    \end{equation*}
     By directly tracing the index 1 and 3 of $Q$, we can obtain
    \begin{equation}
        tr(Q)=\frac{m+n-2}{m}P.\nonumber\\
    \end{equation}
    Moreover, the radial $Q$ curvature is given by
    \begin{Prop}\cite{he2012}
    For $(m\neq1)$-quasi-Einstein manifolds, the following holds:
       \begin{equation}\label{ridial2}
            Q_{ijkl}\nabla_{l}u=\frac{u}{m}(\nabla_{i}P_{jk}-\nabla_{j}P_{ik})-\frac{1}{m}(P_{jl}g_{ik}-P_{il}g_{jk})\nabla_{l}u.
       \end{equation}
    \end{Prop}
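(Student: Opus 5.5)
The plan is to obtain \eqref{ridial2} by a direct computation: commute third covariant derivatives of $u$, then rewrite everything in terms of $P$ and $\rho$. This parallels the derivation of \eqref{ridial1} for solitons.

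\textbf{Step 1 (commutation).} Fix the sign convention from the soliton case. Differentiating $\nabla_j\nabla_k f=\lambda g_{jk}-R_{jk}$ and comparing with \eqref{ridial1} shows that in the paper's index convention
\[
\nabla_i\nabla_j\nabla_k u-\nabla_j\nabla_i\nabla_k u=R_{ijkl}\nabla_l u .
\]
Differentiating \eqref{eqn2} and antisymmetrizing in $i,j$ then gives
\[
R_{ijkl}\nabla_l u=\frac{1}{m}\big[(R_{jk}-\lambda g_{jk})\nabla_i u-(R_{ik}-\lambda g_{ik})\nabla_j u\big]+\frac{u}{m}\big(\nabla_iR_{jk}-\nabla_jR_{ik}\big).
\]

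\textbf{Step 2 (pass to $P$).} Substitute $Ric-\lambda g=P+(\rho-\lambda)g$ and $\nabla_iR_{jk}=\nabla_iP_{jk}+g_{jk}\nabla_i\rho$. The $\nabla\rho$ terms, namely $\frac{u}{m}(g_{jk}\nabla_i\rho-g_{ik}\nabla_j\rho)$, are removed with \eqref{13}, i.e.\ $u\nabla_i\rho=2P_{il}\nabla_l u$. They become $\frac{2}{m}(g_{jk}P_{il}-g_{ik}P_{jl})\nabla_l u$.

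\textbf{Step 3 (assemble $Q$).} From the definition of the Kulkarni--Nomizu product,
\[
(P\owedge g)_{ijkl}\nabla_l u=P_{ik}\nabla_j u-P_{jk}\nabla_i u+(g_{ik}P_{jl}-g_{jk}P_{il})\nabla_l u,
\]
\[
(g\owedge g)_{ijkl}\nabla_l u=2(g_{ik}\nabla_j u-g_{jk}\nabla_i u).
\]
Adding $\frac1m(P\owedge g)\nabla u+\frac{\rho-\lambda}{2m}(g\owedge g)\nabla u$ to $Rm\,\nabla u$ from Step 2 has the following effects:
\begin{itemize}
\item the terms $\frac1m(P_{jk}\nabla_iu-P_{ik}\nabla_ju)$ cancel;
\item the terms $\frac{\rho-\lambda}{m}(g_{jk}\nabla_iu-g_{ik}\nabla_ju)$ cancel;
\item the $P_{\cdot l}\nabla_l u$ terms combine to $\frac{1}{m}(2-1)(g_{jk}P_{il}-g_{ik}P_{jl})\nabla_l u=-\frac1m(P_{jl}g_{ik}-P_{il}g_{jk})\nabla_l u$.
\end{itemize}
What remains is exactly $\frac{u}{m}(\nabla_iP_{jk}-\nabla_jP_{ik})$ plus that last term, which is \eqref{ridial2}.

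The argument is purely algebraic. The main obstacle is sign and index bookkeeping: the curvature sign convention in Step 1 and the placement of indices in $\owedge$ contracted with $\nabla_l u$. To fix both, I would first calibrate the commutation sign against \eqref{ridial1}, then check the coefficient of the $P_{\cdot l}\nabla_l u$ terms (the $2-1$ count), since that is where an error would show up.
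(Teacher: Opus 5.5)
Your computation is correct: the commutation sign, the removal of the $\nabla\rho$ terms via \eqref{13}, and the $2-1$ count on the $P_{\cdot l}\nabla_l u$ terms all check out and give exactly \eqref{ridial2}. The paper gives no proof of its own here (it cites He--Petersen--Wylie), but your argument is the same direct third-derivative commutation the paper uses for the analogous Proposition~\ref{prop2} for $S$; the one extra ingredient is \eqref{13}, which converts $u\nabla\rho$ into $2P(\nabla u)$.
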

By \eqref{ridial2}, it present identity involving the divergence of $Q$ 
    \begin{equation}\label{16}
        \nabla_{l}(u^{m+1}Q_{ijkl})=0,
    \end{equation}
    which is similar to \eqref{2.5}.
    \begin{Rmk}
    The divergence formula above is equivalent to 
    \begin{equation}\label{17}
       \nabla_{l}Q_{ijkl}=-(m+1)u^{-1}Q_{ijkl}\nabla_{l}u ,
    \end{equation}
    which we shall adopt more frequently in the subsequent calculations.
    \end{Rmk}
 In addition, Case\cite{case2011rigidity} established the elliptic equation for the scalar curvature       
  \begin{equation}\label{18}
        \frac{1}{2}\Delta R+\frac{m+1}{2u}\langle\nabla R,\nabla u\rangle=(\lambda-\rho)tr(P)-|P|^2.
    \end{equation}
Finally, we can give the simplified differential identity of $P$ and $Q$ under the assumption that the scalar curvature is constant. Indeed, by \eqref{13},\eqref{ridial2} and \eqref{17}, we have
    \begin{equation}\label{divP}
        \nabla_{j}P_{ij}=P_{ij}\nabla_{j}u=0,
    \end{equation}
    and 
    \begin{align}\label{divQ}
      \nabla_{l}Q_{ijkl}
      &=-(m+1)u^{-1}Q_{ijkl}\nabla_{l}u\nonumber\\
      &=-\frac{m+1}{m}(\nabla_{i}P_{jk}-\nabla_{j}P_{ik}).
    \end{align}
Moreover, when the scalar curvature is constant, the elliptic equation \eqref{18} shows
    \begin{equation}\label{P2}
        |P|^2=(\lambda-\rho)tr(P).
    \end{equation}

\subsection{Vacuum static spaces and related critical spaces}
In this subsection, we study the general critical-type equation \eqref{Eqn3} and rewrite it in the form of components
    \begin{equation}\label{eqn3}
        \nabla_i\nabla_jf+\frac{R}{n(n-1)}fg_{ij}=(f+a)\mathring{R}_{ij}+bg_{ij}.
    \end{equation}
 Indeed, when $a=1,b=0$, it reduces to the well-known Critical Point Equation , whose solutions are called CPE metrics; when $b+\frac{ R}{n(n-1)}a=0,$ $f+a$ satisfies vacuum static equation; when $a=0$ and $b=-\frac{1}{n-1},$ it is known as Miao-Tam critical metric. 

Without loss of generality, we next consider the equation \eqref{eqn3}. By taking the trace in both sides of \eqref{eqn3}, we have
         \begin{equation}\label{Deltafab}
               \Delta f+\frac{ R}{n-1}f=nb.\nonumber
         \end{equation}
              As \cite[Proposition 2.3]{corvino2000scalar}, it can be proven that the scalar curvature of \((M^n,g)\) is constant when \(f\) is a non-constant solution of \eqref{eqn3} on \(M^n\).

Motivated by the construction of $(m\neq1)$-quasi-Einstein manifolds, we define an algebraic curvature-type tensor which is characterized by the property that traces to a multiple of $Ric$
\begin{equation*}
    S:=Rm+Ric\owedge g-\frac{R}{2(n-1)}g\owedge g.
\end{equation*}
By directly tracing the index 1 and 3 of $S,$ we can obtain
    \begin{equation}
        tr(S)=(n-1)Ric.\nonumber\\
    \end{equation}
\begin{Prop}\label{prop2}
Let $f$ be a smooth solution to \eqref{eqn3} defined on an $(n\ge3)-$dimensional Riemannian manifold, and let $S$ be defined as above. Then
    \begin{equation}\label{radial3}
        S_{ijkl}\nabla_{l}f=(f+a)(\nabla_{i}R_{jk}-\nabla_{j}R_{ik})+(R_{jl}g_{ik}-R_{il}g_{jk})\nabla_{l}f.\nonumber
    \end{equation}
\end{Prop}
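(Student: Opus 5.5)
The identity should follow from commuting covariant derivatives in the Hessian equation \eqref{eqn3}, followed by a direct expansion of the Kulkarni--Nomizu terms in $S$. With the sign conventions of the paper, the Ricci identity reads
$$\nabla_i\nabla_j\nabla_k f-\nabla_j\nabla_i\nabla_k f=R_{ijkl}\nabla_l f.$$
This is the convention under which \eqref{ridial1} holds for solitons, which is a useful consistency check.

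\textbf{Step 1 (reduce to constant scalar curvature).} If $f$ is non-constant, the scalar curvature $R$ is constant, by the remark following \eqref{eqn3} (as in \cite[Proposition 2.3]{corvino2000scalar}). If $f$ is constant, then \eqref{eqn3} gives $(f+a)\mathring{R}_{ij}=\big(\tfrac{Rf}{n(n-1)}-b\big)g_{ij}$. Taking the trace shows the right-hand side vanishes, since $\mathring{Ric}$ is traceless, so $(f+a)\mathring{Ric}=0$. Hence either $f+a=0$, or $g$ is Einstein; in the latter case $\nabla Ric=0$ because $n\ge3$. In both cases both sides of the claimed identity vanish. So we may assume $R$ is constant.

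\textbf{Step 2 (differentiate the equation).} Substitute $\mathring{R}_{ij}=R_{ij}-\frac{R}{n}g_{ij}$ and rewrite \eqref{eqn3} as
$$\nabla_i\nabla_j f=(f+a)R_{ij}+\varphi(f)g_{ij},\qquad \varphi(f)=-\frac{R}{n}(f+a)-\frac{R}{n(n-1)}f+b.$$
Since $R$ is constant, $\varphi'(f)=-\frac{R}{n-1}$. Differentiating then gives
$$\nabla_i\nabla_j\nabla_k f=(f+a)\nabla_iR_{jk}+R_{jk}\nabla_i f-\frac{R}{n-1}g_{jk}\nabla_i f.$$
Antisymmetrize in $i,j$, using that $\nabla_j\nabla_k f$ is symmetric so the left-hand side is governed by the Ricci identity above. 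This yields
$$R_{ijkl}\nabla_l f=(f+a)(\nabla_iR_{jk}-\nabla_jR_{ik})+R_{jk}\nabla_i f-R_{ik}\nabla_j f-\frac{R}{n-1}\big(g_{jk}\nabla_i f-g_{ik}\nabla_j f\big).$$

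\textbf{Step 3 (add the correction terms of $S$).} Contract the definition of $\owedge$ with $\nabla_l f$:
$$(Ric\owedge g)_{ijkl}\nabla_l f=R_{ik}\nabla_j f-R_{jk}\nabla_i f+(R_{jl}g_{ik}-R_{il}g_{jk})\nabla_l f,$$
$$-\frac{R}{2(n-1)}(g\owedge g)_{ijkl}\nabla_l f=-\frac{R}{n-1}\big(g_{ik}\nabla_j f-g_{jk}\nabla_i f\big).$$
Adding these to the expression from Step 2, the terms $R_{jk}\nabla_i f-R_{ik}\nabla_j f$ cancel, and the two scalar-curvature terms cancel exactly. What remains is $(f+a)(\nabla_iR_{jk}-\nabla_jR_{ik})+(R_{jl}g_{ik}-R_{il}g_{jk})\nabla_l f$, which is the claim.

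The only delicate points are the sign convention in the Ricci identity and the degenerate case of constant $f$ in Step 1. Everything else is a routine algebraic check that the coefficients in $S$ are exactly those which absorb the non-derivative terms.
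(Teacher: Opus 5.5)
Your proof is correct and follows the paper's argument: commute third derivatives of $f$ via the Ricci identity, substitute \eqref{eqn3}, and check that the Kulkarni--Nomizu terms in $S$ cancel both $R_{jk}\nabla_i f-R_{ik}\nabla_j f$ and the scalar-curvature terms. Your Step 1 is a useful addition. It states explicitly that $R$ is constant, which the paper uses silently when it differentiates $\frac{R}{n(n-1)}f g_{jk}$ and replaces $\nabla\mathring{Ric}$ by $\nabla Ric$. It also settles the constant-$f$ case, which the paper does not address.
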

\begin{proof}
Commuting the third derivatives of $f$ and using equation \eqref{eqn3}, we obtain
    \begin{align}
        R_{ijkl}\nabla_{l}f
        &=\nabla_{i}\nabla_{j}\nabla_{k}f-\nabla_{j}\nabla_{i}\nabla_{k}f\nonumber\\
        &=\nabla_{i}((f+a)\mathring{R}_{jk}-\frac{R}{n(n-1)}fg_{jk})-\nabla_{j}((f+a)\mathring{R}_{ik}-\frac{R}{n(n-1)}fg_{ik})\nonumber\\
        &=(f+a)(\nabla_{i}\mathring{R}_{jk}-\nabla_{j}\mathring{R}_{ik})+\nabla_{i}fR_{jk}-\nabla_{j}fR_{ik}\nonumber\\
        &\quad-\frac{R}{(n-1)}(g_{jk}\nabla_{i}f-g_{ik}\nabla_{j}f).\nonumber
    \end{align}
    By the definition of the curvature tensor $S,$ we get
    \begin{align}
      S_{ijkl}\nabla_{l}f
      &=R_{ijkl}\nabla_{l}f+(R_{ik}g_{jl}+R_{jl}g_{ik}-R_{il}g_{jk}-R_{jk}g_{il})\nabla_{l}f\nonumber\\
      &\quad-\frac{R}{(n-1)}(g_{ik}g_{jl}-g_{il}g_{jk})\nabla_{l}f\nonumber\\
      &=(f+a)(\nabla_{i}R_{jk}-\nabla_{j}R_{ik})+(R_{jl}g_{ik}-R_{il}g_{jk})\nabla_{l}f,\nonumber
    \end{align}    
    this is what we want to prove.
\end{proof}

Finally, we still provide the divergence formula for $S,$ but unfortunately, it does not hold true for ${\rm div}((f+a)^2S)=0$ like a  $(m\neq1)$-quasi-Einstein manifold. However, it is not a big deal and only requires some caution in certain areas. Based on Proposition \ref{prop2}, we verify that
\begin{align}\label{divS}
    \nabla_{l}S_{ijkl}(f+a)
    &=-2(\nabla_{i}R_{jk}-\nabla_{j}R_{ik})(f+a)\nonumber\\
    &=-2S_{ijkl}\nabla_{l}f-2(R_{jl}g_{ik}-R_{il}g_{jk})\nabla_{l}f.
\end{align}

\section{Proof The Main Results}
\subsection{The proof of Theorem \ref{thm1}}
To begin with, we present an integral identity for a general complete non-compact Ricci soliton. Although a similar identity can be found in \cite{munteanu2013gradient}, our alternative derivation, which is carried out with an eye toward the subsequent proof, leads to a slightly modified integral remainder term that better serves the uniformity of our later integral identities.
\label{sec:headings}
    \begin{Prop}\label{prop3}
    Let  $(M, g,f)$ be a complete non-compact gradient Ricci soliton, $\phi\in C_c^{\infty}(M)$ be a cut-off function on $M$. Then
      \begin{align}\label{26}
         \int_{\textit{M} }|{\rm div}Rm|^2 e^{-f}\phi^2
         &=\int_{\textit{M} }|\nabla Ric|^2 e^{-f}\phi^2-\int_{\textit{M} }R_{ijkl}R_{ik}\nabla_{l}f e^{-f}\nabla_{j}\phi^2\nonumber\\
         &\quad+\int_{\textit{M} }\nabla_{i}R_{jk}R_{ik} e^{-f}\nabla_{j}\phi^2.  
      \end{align}
    
    \end{Prop}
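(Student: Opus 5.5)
The plan is to start from the pointwise identity $({\rm div}Rm)_{ijk}=\nabla_jR_{ik}-\nabla_iR_{jk}$ from \eqref{divR}, combined with \eqref{ridial1}, which gives $({\rm div}Rm)_{ijk}=R_{ijkl}\nabla_lf$. I will expand the square only partially, keeping one factor as the curvature expression and the other as the difference of covariant derivatives of $Ric$:
\begin{equation*}
|{\rm div}Rm|^2=|\nabla Ric|^2-\nabla_iR_{jk}\nabla_jR_{ik}+R_{ijkl}\nabla_lf\,\nabla_jR_{ik}-|\nabla Ric|^2+\nabla_iR_{jk}\nabla_jR_{ik}.
\end{equation*}
More precisely, I will write $|{\rm div}Rm|^2=2|\nabla Ric|^2-2\nabla_iR_{jk}\nabla_jR_{ik}$ and also $|{\rm div}Rm|^2=2R_{ijkl}\nabla_lf\,\nabla_jR_{ik}$, using antisymmetry in $i,j$. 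Averaging these two expressions gives
\begin{equation*}
|{\rm div}Rm|^2=|\nabla Ric|^2-\nabla_iR_{jk}\nabla_jR_{ik}+R_{ijkl}\nabla_lf\,\nabla_jR_{ik}.
\end{equation*}
Each of the last two terms will then be integrated against $e^{-f}\phi^2$ and integrated by parts once. The claim is that the interior contributions cancel, leaving exactly the two cut-off terms in \eqref{26}. Since $\phi$ has compact support, there are no boundary terms at infinity.

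\textbf{Step 1: the curvature term.} In $\int R_{ijkl}\nabla_lf\,e^{-f}\nabla_jR_{ik}\phi^2$, I move $\nabla_j$ off $R_{ik}$. This produces the cut-off term $-\int R_{ijkl}R_{ik}\nabla_lf\,e^{-f}\nabla_j\phi^2$, which is the first remainder in \eqref{26}, plus the interior term
\begin{equation*}
-\int\nabla_j\big(R_{ijkl}e^{-f}\big)\nabla_lf\,R_{ik}\phi^2-\int R_{ijkl}\nabla_j\nabla_lf\,R_{ik}e^{-f}\phi^2 .
\end{equation*}
By the pair symmetry $R_{ijkl}=R_{lkji}$, the first piece is a divergence on the last slot, so it vanishes by \eqref{2.5}. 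For the second piece I substitute $\nabla_j\nabla_lf=\lambda g_{jl}-R_{jl}$ from \eqref{eqn1}. This leaves the cubic expression $\int(\mp\lambda|Ric|^2+R_{ijkl}R_{jl}R_{ik})e^{-f}\phi^2$, with the sign fixed by the paper's trace convention.

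\textbf{Step 2: the cross term.} In $-\int\nabla_iR_{jk}\nabla_jR_{ik}e^{-f}\phi^2$, I integrate $\nabla_i$ by parts. This produces the cut-off term $+\int\nabla_jR_{ik}R_{jk}e^{-f}\nabla_i\phi^2$, which after relabeling $i\leftrightarrow j$ is the second remainder $\int\nabla_iR_{jk}R_{ik}e^{-f}\nabla_j\phi^2$. The interior term is $\int R_{jk}\nabla_i(e^{-f}\nabla_jR_{ik})\phi^2$. To evaluate it, I commute $\nabla_i\nabla_j R_{ik}=\nabla_j\nabla_iR_{ik}+(\text{Ricci identity terms } Rm* Ric)$. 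I then use \eqref{div Ric}, namely $\nabla_iR_{ik}=R_{kl}\nabla_lf$, together with $\nabla_j\nabla_lf=\lambda g_{jl}-R_{jl}$. The $\nabla f$-terms combine into $e^{-f}$ times a derivative of the weighted divergence $\nabla_i(R_{ik}e^{-f})$, and this vanishes by \eqref{2.6}. What survives is again a combination of $\lambda|Ric|^2$, $R_{ij}R_{jk}R_{ki}$, and $R_{ijkl}R_{ik}R_{jl}$. Some first-order pieces $R_{jk}R_{kl}\nabla_j\nabla_lf$ may appear; these also reduce via \eqref{eqn1}.

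\textbf{Main obstacle.} The hard part is the bookkeeping in Steps 1 and 2: checking that the zeroth-order cubic terms, namely $\lambda|Ric|^2$, ${\rm tr}(Ric^3)$, and $Rm(Ric,Ric)$, cancel exactly between the two steps. This depends delicately on the sign convention $R(X,Y)Z=\nabla_X\nabla_YZ-\dots$ and on which index pair of $R_{ijkl}$ traces to $R_{ik}$. I would first fix the convention by checking \eqref{ridial1} against the commutation $\nabla_i\nabla_j\nabla_kf-\nabla_j\nabla_i\nabla_kf$, and only then run the Ricci identity in Step 2. If a residual term survives, I would instead derive the cross term from the identity $\int|{\rm div}Rm|^2e^{-f}\phi^2=\int({\rm div}Rm)_{ijk}(\nabla_jR_{ik}-\nabla_iR_{jk})e^{-f}\phi^2$ directly. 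In that version, both factors are integrated by parts through \eqref{2.5}, which avoids the Ricci identity altogether, and the answer is then compared with the form stated in \eqref{26}.
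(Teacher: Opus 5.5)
Your proposal is correct and is essentially the paper's proof. The paper also writes $\int|{\rm div}Rm|^2e^{-f}\phi^2$ both as $2\int R_{ijkl}\nabla_lf\,\nabla_jR_{ik}e^{-f}\phi^2$ and as $2\int(|\nabla Ric|^2-\nabla_jR_{ik}\nabla_iR_{jk})e^{-f}\phi^2$, integrates each by parts and adds them, which is your averaging; the only cosmetic difference is that the paper disposes of the $\nabla f$-terms by further integrations by parts rather than by differentiating $\nabla_iR_{ik}=R_{kl}\nabla_lf$ directly. With the paper's convention $R_{ijkj}=R_{ik}$, your Step 1 leaves $\int(R_{ijkl}R_{ik}R_{jl}-\lambda|Ric|^2)e^{-f}\phi^2$ and Step 2 leaves exactly its negative (the ${\rm tr}(Ric^3)$ terms cancel inside Step 2), so the fallback is unnecessary; just discard your first display, whose right-hand side is only $\tfrac12|{\rm div}Rm|^2$.
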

    
    \begin{proof}
    Initially, we invoke \eqref{divR} and \eqref{ridial1} to infer
       \begin{align}\label{27}
           \int_{\textit{M} }|{\rm div}Rm|^2 e^{-f}\phi^2
           &=\int_{\textit{M}}R_{ijkl}\nabla_{l}f(\nabla_{j}R_{ik}-\nabla_{i}R_{jk})e^{-f}\phi^2\nonumber\\
           &=2\int_{\textit{M}}R_{ijkl}\nabla_{l}f\nabla_{j}R_{ik}e^{-f}\phi^2\nonumber\\
           &=-2\int_{\textit{M}}R_{ik}\nabla_{j}(R_{ijkl}\nabla_{l}f e^{-f}\phi^2)\nonumber\\
           &=-2\int_{\textit{M}}R_{ijkl}(\lambda g_{jl}-R_{il})R_{ik}e^{-f}\phi^2-2\int_{\textit{M}}R_{ijkl}\nabla_{l}fR_{ik}\nabla_{j}\phi^2\nonumber\\
           &=2\int_{\textit{M}}R_{ijkl}R_{ik}R_{jl}e^{-f}\phi^2-2\lambda\int_{\textit{M}}|Ric|^2e^{-f}\phi^2\nonumber\\
           &\quad-2\int_{\textit{M}}R_{ijkl}\nabla_{l}fR_{ik}\nabla_{j}\phi^2,       
       \end{align} 
       which we use integration by parts in the third equality and equation\eqref{eqn1},\eqref{2.5} in the forth equality.
       
    On the oher hand, by using \eqref{divR} again one sees that
    \begin{align}\label{28}
        \int_{\textit{M}}|{\rm div}Rm|^2 e^{-f}\phi^2
        &=\int_{\textit{M}}|\nabla_{j}R_{ik}-\nabla_{i}R_{jk}|e^{-f}\phi^2\nonumber\\
        &=2\int_{\textit{M}}|\nabla Ric|^2e^{-f}\phi^2-2\int_{\textit{M}}\nabla_{j}R_{ik}\nabla_{i}R_{jk}e^{-f}\phi^2.
    \end{align}
    Using integration by parts again, we derive that 
    \begin{align}\label{29}
        -\int_{\textit{M}}\nabla_{j}R_{ik}\nabla_{i}R_{jk}e^{-f}\phi^2
        &=\int_{\textit{M}}R_{ik}\nabla_{j}(\nabla_{i}R_{jk}e^{-f}\phi^2)\nonumber\\
        &=\int_{\textit{M}}R_{ik}\nabla_{j}\nabla_{i}R_{jk}e^{-f}\phi^2-\int_{\textit{M}}\nabla_{i}R_{jk}R_{ik}\nabla_{j}fe^{-f}\phi^2\nonumber\\
        &\quad+\int_{\textit{M}}R_{ik}\nabla_{i}R_{jk}e^{-f}\nabla_{j}\phi^2.       
    \end{align}
    Proceeding, from the Ricci identities for any Riemannian manifold, it holds
    \begin{align}\label{30}
        \int_{\textit{M}}R_{ik}\nabla_{j}\nabla_{i}R_{jk}e^{-f}\phi^2
        &=\int_{\textit{M}}R_{ik}(\nabla_{i}\nabla_{j}R_{jk}+R_{lk}R_{ljij}+R_{jl}R_{lkij})e^{-f}\phi^2\nonumber\\
        &=-\int_{\textit{M}}R_{ik}\nabla_{j}R_{jk}e^{-f}\nabla_{i}\phi^2+\int_{\textit{M}}R_{ij}R_{ik}R_{jk}e^{-f}\phi^2\nonumber\\
        &\quad-\int_{\textit{M}}R_{ijkl}R_{ik}R_{jk}e^{-f}\phi^2.
    \end{align}
    For the second term on the right-hand side of \eqref{29}, the divergence theorem yields
    \begin{align}\label{31}
        -\int_{\textit{M}}\nabla_{i}R_{jk}R_{ik}\nabla_{j}fe^{-f}\phi^2
        &=\int_{\textit{M}}R_{ik}R_{jk}\nabla_{i}\nabla_{j}fe^{-f}\phi^2+\int_{\textit{M}}R_{ik}R_{jk}\nabla_{j}fe^{-f}\nabla_{i}\phi^2\nonumber\\
        &=\lambda\int_{\textit{M}}|Ric|^2e^{-f}\phi ^2-\int_{\textit{M}}R_{ij}R_{ik}R_{jk}e^{-f}\phi^2\nonumber\\
        &\quad+\int_{\textit{M}}R_{ik}R_{jk}\nabla_{j}fe^{-f}\nabla_{i}\phi^2.      
    \end{align}
    Now, substituting \eqref{29},\eqref{30} and \eqref{31} into \eqref{28} and combing with \eqref{div Ric}, we get
    \begin{align}\label{32}
        \int_{\textit{M}}|{\rm div}Rm|^2 e^{-f}\phi^2
        &=2\int_{\textit{M}}|\nabla Ric|^2e^{-f}\phi^2-2\int_{\textit{M}}R_{ijkl}R_{ik}R_{jk}e^{-f}\phi^2\nonumber\\
        &\quad+2\lambda\int_{\textit{M}}|Ric|^2e^{-f}\phi ^2+2\int_{\textit{M}}R_{ik}\nabla_{i}R_{jk}e^{-f}\nabla_{j}\phi^2.       
    \end{align}
    Finally, adding \eqref{27} and \eqref{32} completes the proof.   
    \end{proof}
    Based on this key identity, which is established in Proposition \ref{prop3}, and combined with the curvature condition, we can prove the main theorem \ref{thm1}.
    \begin{proof}[Proof of Theorem \ref{thm1}]
    
    A direct norm computation shows that
    \begin{equation}\label{3.8}
    |{\rm div}Rm|^2=2|\nabla Ric|^2-2\nabla_{i}R_{jk}\nabla_{j}R_{ik},
    \end{equation}
and
\begin{equation}\label{3.9}
    \frac{1}{3}|D|^2=D_{ijk}\nabla_{i}R_{jk}=|\nabla Ric|^2+2\nabla_{i}R_{jk}\nabla_{j}R_{ik}.
\end{equation}
Thus, we sum the expressions obtained in \eqref{3.8} and \eqref{3.9} to deduce a key property in order to obtain our results, namely
\begin{equation}
    |{\rm div}Rm|^2+ \frac{1}{3}|D|^2=3|\nabla Ric|^2\nonumber
\end{equation}
Suppose $D=0,$ then we have $|{\rm div}Rm|^2=3|\nabla Ric|^2.$ Combining this with Proposition \ref{prop3}, we obtain
\begin{equation}\label{3.10}
    2\int_{\textit{M} }|\nabla Ric|^2 e^{-f}\phi ^2=\int_{\textit{M} }\nabla_{i}R_{jk}R_{ik} e^{-f}\nabla_{j}\phi^2-\int_{\textit{M} }R_{ijkl}R_{ik}\nabla_{l}f e^{-f}\nabla_{j}\phi^2.
\end{equation}
Finally, we estimate the two integral residues respectively, the Cauchy-Schwarz inequality shows

\begin{align}
     |\int_{\textit{M} }\nabla_{i}R_{jk}R_{ik} e^{-f}\nabla_{j}\phi^2|
    &\leq2\int_{\textit{M} }|\nabla_{i}R_{jk}||R_{ik}| e^{-f}\phi|\nabla_{j}\phi| \nonumber\\ 
    &\leq\frac{1}{2}\int_{\textit{M} }|\nabla Ric|^2 e^{-f}\phi^2+2\int_{\textit{M} }|Ric|^2e^{-f}|\nabla\phi|^2,\nonumber
\end{align}
and

\begin{align}
    |\int_{\textit{M} }R_{ijkl}R_{ik}\nabla_{l}f e^{-f}\nabla_{j}\phi^2|
    &\leq2\int_{\textit{M} }|R_{ijkl}\nabla_{l}f||R_{ik}| e^{-f}\phi|\nabla_{j}\phi|  \nonumber\\
    &\leq\frac{1}{6}\int_{\textit{M} }|{\rm div}(Rm)|^2 e^{-f}\phi^2+6\int_{\textit{M} }|Ric|^2e^{-f}|\nabla\phi|^2\nonumber\\
    &=\frac{1}{2}\int_{\textit{M} }|\nabla Ric|^2 e^{-f}\phi^2+6\int_{\textit{M} }|Ric|^2e^{-f}|\nabla\phi|^2,\nonumber
\end{align}
 we have used the fact that $|{\rm div}Rm|^2=3|\nabla Ric|^2$ again in the last equality. Consequently, \eqref{3.10} implies that
\begin{align}
    \int_{\textit{M} }|\nabla Ric|^2 e^{-f}\phi^2\leq8\int_{\textit{M} }|Ric|^2e^{-f}|\nabla\phi|^2.\nonumber
\end{align}
Take $\phi$ such that $\phi=1$ on $B_{p}(r),$ $\phi=0$ on $M \backslash B_{p}(2 r),$ and $|\nabla\phi|\leq\frac{c}{r},$ it follows that as $r\rightarrow\infty$
    \begin{equation}
        \int_{B_{p}(r)}|\nabla Ric|^2 e^{-f}\leq\frac{8c}{r^2}\int_{B_{p}(2r)\backslash B_{p}(r)}|Ric|^2e^{-f}\rightarrow0,\nonumber
    \end{equation}
where the last convergence is justified by Theorem 1.1 of \cite{munteanu2013gradient}, which guarantees that, for any shrinker, the following holds:
\begin{equation}
    \int_{\textit{M} }|Ric|^2e^{-f}<\infty.\nonumber
\end{equation}
This proves the Ricci tensor is parallel.

 Applying the result of Petersen-Wylie \cite[Proposition 1.3]{petersen2009rigidity} to this situation, we conclude that any gradient shrinking Ricci soliton with parallel Ricci tensor is rigid. This proves the theorem.
\end{proof}

\subsection{The proof of Theorem \ref{thm2}}
    
In view of the fact that the computations on$(m > 1)$-quasi-Einstein manifold are considerably more intricate than those on the Ricci soliton, it is expedient to isolate a preliminary lemma prior to the derivation of our integral identity. This auxiliary result will greatly facilitate the ensuing analysis by allowing us to apply it directly whenever needed.

    \begin{Lemma}\label{lemm1}
    Let $(M, g,u)$ be a nontrivial compact simply connected $(m > 1)$-quasi-Einstein manifold with smooth boundary $\partial M$ and constant sacalar curvature. Then
    \begin{align}\label{lemma1}
        \int_{\textit{M}}\nabla_{i}P_{jk}P_{ik}\nabla_{j}uu^{m-1}=-\frac{1}{m}\int_{\textit{M}}tr(P^3)u^{m}-\frac{1}{m}\int_{\textit{M}}(\rho-\lambda)|P|^2u^{m}.
    \end{align}
    \end{Lemma}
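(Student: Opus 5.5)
The identity should in fact hold pointwise, before any integration. The plan is to move the covariant derivative off $P$ and onto $\nabla u$, using the fact that $P(\nabla u,\cdot)$ vanishes identically when the scalar curvature is constant.

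First, since $R$ is constant, $\rho=\frac{(n-1)\lambda-R}{m-1}$ is a constant. So \eqref{divP} gives $P_{jk}\nabla_{j}u=0$ everywhere on $M$. Differentiating this identity in the direction $i$ gives
\begin{equation*}
\nabla_{i}P_{jk}\nabla_{j}u=-P_{jk}\nabla_{i}\nabla_{j}u .
\end{equation*}
Next I substitute the quasi-Einstein equation \eqref{eqn2}, rewritten through $Ric=P+\rho g$, namely $\nabla_i\nabla_j u=\frac{u}{m}\big(P_{ij}+(\rho-\lambda)g_{ij}\big)$. This yields
\begin{equation*}
\nabla_{i}P_{jk}\nabla_{j}u=-\frac{u}{m}\Big(P_{jk}P_{ij}+(\rho-\lambda)P_{ik}\Big).
\end{equation*}

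Contracting with $P_{ik}$ and using the symmetry of $P$ gives $P_{ik}P_{jk}P_{ij}=tr(P^3)$ and $P_{ik}P_{ik}=|P|^2$. Hence
\begin{equation*}
\nabla_{i}P_{jk}P_{ik}\nabla_{j}u\,u^{m-1}=-\frac{1}{m}tr(P^3)u^{m}-\frac{1}{m}(\rho-\lambda)|P|^2u^{m}
\end{equation*}
holds pointwise on $M$. Integrating over $M$, which is compact so all integrals are finite, gives \eqref{lemma1}.

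The only step needing care is the first: making sure that constancy of $\rho$ really gives $P(\nabla u,\cdot)\equiv 0$ from \eqref{13}, which it does since $u\nabla\rho=0$. Because no integration by parts is used, there are no boundary terms on $\partial M$ to control. If one preferred an integration-by-parts route, writing $u^{m-1}\nabla_j u=\frac1m\nabla_j(u^m)$, the boundary term would still vanish since $u=0$ on $\partial M$. However, that route produces an extra $\nabla P*\nabla P$ term, which the pointwise argument above avoids.
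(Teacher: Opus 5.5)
Your proof is correct. It reaches the same intermediate expression as the paper, $-P_{jk}P_{ik}\nabla_i\nabla_j u\,u^{m-1}$, followed by the same substitution of \eqref{eqn2}; what differs is how you get there.

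The paper integrates by parts in the index $i$, moving $\nabla_i$ from $P_{jk}$ onto $P_{ik}\nabla_j u\,u^{m-1}$. It then discards the resulting terms using ${\rm div}\,P=0$ and $P(\nabla u)=0$ from \eqref{divP}, and invokes $m>1$ to kill the boundary term. You instead differentiate the identity $P_{jk}\nabla_j u\equiv 0$ directly, which gives the formula pointwise, before any integration.

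Your route is slightly stronger and cleaner. It uses neither ${\rm div}\,P=0$ nor the boundary values of $u$, so for this lemma $m>1$ is only needed through $m\neq 1$ (to define $\rho$ and to get $P(\nabla u)=0$ from \eqref{13}). It also shows what the paper's computation really is: the vector field $X_i=P_{jk}P_{ik}\nabla_j u\,u^{m-1}$ being integrated by parts vanishes identically. Its boundary term is therefore zero for trivial reasons, and expanding $\nabla_i X_i=0$ reproduces exactly your pointwise identity.

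Your sign is the right one. The last displayed line of the paper's proof reads $+\frac{1}{m}\int_M tr(P^3)u^m$, which is a typo: the preceding line, your computation and the statement all give $-\frac{1}{m}$.

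Your closing remark, that an integration-by-parts route produces a $\nabla P*\nabla P$ term, holds for moving $\nabla_j$ via $u^{m-1}\nabla_j u=\frac1m\nabla_j(u^m)$. The paper moves $\nabla_i$ instead, and no such term arises there.
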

    \begin{proof}
    By using equation \eqref{eqn2} and \eqref{divP}, one obtains that
      \begin{align}
          \int_{\textit{M}}\nabla_{i}P_{jk}P_{ik}\nabla_{j}uu^{m-1}
          &=-\int_{\textit{M}}P_{jk}\nabla_{i}(P_{ik}\nabla_{j}uu^{m-1})\nonumber\\
          &=-\int_{\textit{M}}P_{jk}P_{ik}\nabla_{i}\nabla_{j}uu^{m-1}\nonumber\\
          &=\frac{1}{m}\int_{\textit{M}}tr(P^3)u^{m}-\frac{1}{m}\int_{\textit{M}}(\rho-\lambda)|P|^2u^{m}.\nonumber
      \end{align} 
In the first equality we have applied integration by parts. Here we observe for the first time the necessity of the condition $m>1,$ which ensures the vanishing of the boundary terms.
    \end{proof}
We now present the integral identity on $(m > 1)$-quasi-Einstein manifold. Unlike in the case of the Ricci soliton, here we need to assume that the scalar curvature is constant. This assumption considerably simplifies our computations and is also natural under our curvature condition that the Ricci tensor is cyclic parallel.    
    \begin{Prop}\label{prop4}
        Let $(M, g,u,\lambda)$ be a non-trivial compact simply connected $(m > 1)$-quasi-Einstein manifold with smooth boundary $\partial M$ and constant sacalar curvature. Then
        \begin{equation}
            \int_{\textit{M}}|{\rm div}Q|^2 u^{m}=\frac{(m+1)^2}{m^2}\int_{\textit{M}}|\nabla P|^2 u^{m}.
        \end{equation}
    \end{Prop}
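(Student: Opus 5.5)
The plan is to mimic the proof of Proposition \ref{prop3}: compute $\int_M|{\rm div}Q|^2u^m$ in two different ways and compare. Throughout I use that $R$, and hence $\rho$ and $tr(P)$, are constant. Then \eqref{divP}, \eqref{divQ} and \eqref{P2} are available, and \eqref{eqn2} reads $\nabla_i\nabla_ju=\frac{u}{m}(P_{ij}+(\rho-\lambda)g_{ij})$.

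\textbf{Step 1: pointwise reduction.} By \eqref{divQ},
\begin{equation*}
|{\rm div}Q|^2=\frac{(m+1)^2}{m^2}|\nabla_iP_{jk}-\nabla_jP_{ik}|^2=\frac{2(m+1)^2}{m^2}\big(|\nabla P|^2-\nabla_iP_{jk}\nabla_jP_{ik}\big).
\end{equation*}
The statement is therefore equivalent to the weighted identity
\begin{equation*}
\int_M\nabla_iP_{jk}\nabla_jP_{ik}\,u^m=\frac12\int_M|\nabla P|^2u^m .
\end{equation*}
I would establish it by computing both $\int_M|{\rm div}Q|^2u^m$ and $\int_M\nabla_iP_{jk}\nabla_jP_{ik}u^m$ in terms of the cubic integrals $\int Q_{ijkl}P_{ik}P_{jl}u^m$, $\int tr(P^3)u^m$ and $\int(\rho-\lambda)|P|^2u^m$.

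\textbf{Step 2: first computation, via the radial $Q$ curvature.} Write one factor of ${\rm div}Q$ using \eqref{17}, as $-(m+1)u^{-1}Q_{ijkl}\nabla_lu$, and the other using \eqref{divQ}. By the antisymmetry of $Q$ in $i,j$, this gives
\begin{equation*}
\int_M|{\rm div}Q|^2u^m=\frac{2(m+1)^2}{m}\int_M Q_{ijkl}\nabla_lu\,\nabla_iP_{jk}\,u^{m-1}.
\end{equation*}
Next integrate by parts in $i$, writing $Q_{ijkl}\nabla_lu\,u^{m-1}$ and using \eqref{17} to control $\nabla_iQ_{ijkl}$. The factor $u^{m-1}$, with $m>1$, kills the boundary term because $u=0$ on $\partial M$. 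The Hessian of $u$ then produces $\frac{u}{m}Q_{ijkl}P_{jk}(P_{il}+(\rho-\lambda)g_{il})$. The remaining derivative terms are of the form $Q(\nabla u,\cdot,\cdot,\nabla u)P$ with weight $u^{m-2}$. I would convert these back via \eqref{ridial2}, which expresses $Q_{ijkl}\nabla_lu$ through $u\,\nabla P$ and $P\nabla u$. Terms of the form $\nabla_iP_{jk}P_{ik}\nabla_ju\,u^{m-1}$ are evaluated by Lemma \ref{lemm1}, and terms like $P_{ij}\nabla_ju$ vanish by \eqref{divP}. Finally I would use the trace identity $tr(Q)=\frac{m+n-2}{m}P$ to simplify the contracted terms.

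\textbf{Step 3: second computation, via the Ricci identity.} Integrate $-\int\nabla_jP_{ik}\nabla_iP_{jk}u^m$ by parts in $j$ and commute $\nabla_j\nabla_i P_{jk}=\nabla_i\nabla_jP_{jk}+Rm*P$, exactly as in \eqref{29}--\eqref{30}. Since $\nabla_jP_{jk}=0$ by \eqref{divP}, only the curvature terms $Rm*P*P$ and the $\nabla u$ term $-m\int\nabla_iP_{jk}P_{ik}\nabla_ju\,u^{m-1}$ survive. The latter is again handled by Lemma \ref{lemm1}. I would then rewrite $Rm$ through $Q=Rm+\frac1mP\owedge g+\frac{\rho-\lambda}{2m}g\owedge g$, so that everything is expressed in the same cubic quantities as in Step 2.

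\textbf{Step 4: comparison.} Combining the two expressions, the cubic term $\int Q_{ijkl}P_{ik}P_{jl}u^m$ should cancel. The leftover $\int tr(P^3)u^m$ and $\int(\rho-\lambda)|P|^2u^m$ terms should cancel using $tr(P)=$ const from \eqref{trP} together with $|P|^2=(\lambda-\rho)tr(P)$ from \eqref{P2}. What remains is exactly the claimed identity $\int\nabla_iP_{jk}\nabla_jP_{ik}u^m=\frac12\int|\nabla P|^2u^m$, equivalently the statement. The main obstacle is Step 4: tracking the constants (powers of $m$, and the contributions of the $g\owedge g$ and $P\owedge g$ parts of $Q$) so that the $Q*P*P$ terms match exactly. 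If they do not cancel directly, I would first try eliminating $\int Q_{ijkl}P_{ik}P_{jl}u^m$ between the two identities, and then use \eqref{P2} together with Lemma \ref{lemm1} to close the argument.
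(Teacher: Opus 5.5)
Your proposal is correct and follows essentially the same route as the paper: you evaluate $\int_M|{\rm div}Q|^2u^m$ once through the radial $Q$ curvature with integration by parts, and once through the Ricci identity with $Rm$ rewritten via $Q$, reduce both using Lemma \ref{lemm1}, $tr(Q)=\frac{m+n-2}{m}P$ and \eqref{P2}, and then add the two expressions. The cancellation you anticipate in Step 4 does occur exactly, so your fallback is unnecessary: the $\int_M Q_{ijkl}P_{ik}P_{jl}u^m$ and $\int_M tr(P^3)u^m$ terms cancel directly, and after \eqref{P2} the scalar remainder is a multiple of $|P|^2\big(m\rho+tr(P)+(n-1)(\rho-\lambda)\big)\int_M u^m$, which vanishes by \eqref{trP}.
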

    \begin{proof}

   On one hand, we invoke \eqref{divQ} to infer
    \begin{align}\label{39}
        \int_{\textit{M}}|{\rm div}Q|^2 u^{m}
        &=-\frac{(m+1)^2}{m}\int_{\textit{M}}Q_{ijkl}\nabla_{l}u(\nabla_{i}P_{jk}-\nabla_{j}P_{ik})u^{m-1}\nonumber\\
        &=-\frac{2(m+1)^2}{m}\int_{\textit{M}}Q_{ijkl}\nabla_{l}u\nabla_{j}P_{ik}u^{m-1}\nonumber\\
        &=\frac{2(m+1)^2}{m}\int_{\textit{M}}P_{ik}\nabla_{j}(Q_{ijkl}\nabla_{l}uu^{m+1}u^{-2})\nonumber\\
        &=\frac{2(m+1)^2}{m}\int_{\textit{M}}Q_{ijkl}P_{ik}\nabla_{j}\nabla_{l}uu^{m-1}\nonumber\\
        &\quad-\frac{4(m+1)^2}{m^2}\int_{\textit{M}}Q_{ijkl}P_{ik}\nabla_{j}u\nabla_{l}uu^{m-2}.\nonumber\\
    \end{align}
In the fourth equality we have applied integration by parts together with \eqref{16}. Here we observe once again that the conditiont $m>1$ is necessary. Next, we compute the two terms on the right-hand side of \eqref{39} separately
    \begin{align}
        \int_{\textit{M}}Q_{ijkl}P_{ik}\nabla_{j}\nabla_{l}uu^{m-1}
        &=\int_{\textit{M}}\frac{u}{m}Q_{ijkl}P_{ik}(P_{jl}+(\rho-\lambda)g_{jl})u^{m-1}\nonumber\\
        &=\frac{1}{m}\int_{\textit{M}}Q_{ijkl}P_{ik}P_{jl}u^{m}+\frac{1}{m}\int_{\textit{M}}(\rho-\lambda)\frac{m+n-2}{m}|P|^2u^{m},
    \end{align}
    and
    \begin{align}\label{41}
        \int_{\textit{M}}Q_{ijkl}P_{ik}\nabla_{j}u\nabla_{l}uu^{m-2}
        &=\int_{\textit{M}}\frac{u}{m}(\nabla_{i}P_{jk}-\nabla_{j}P_{ik})P_{ik}\nabla_{j}uu^{m-2}\nonumber\\
        &=\frac{1}{m}\int_{\textit{M}}\nabla_{i}P_{jk}P_{ik}\nabla_{j}u u^{m-1},
    \end{align}
where we used the fact that when the scalar curvature $R$ is constant, then $P(\nabla u)=0$ and $|P|^2=(\lambda-\rho)tr(P)$ is also constant. The term on the right-hand side of \eqref{41} has already been handled in Lemma \ref{lemm1}. Hence
    \begin{align}\label{42}
       \int_{\textit{M}}|{\rm div}Q|^2 u^{m}
        &=\frac{2(m+1)^2}{m^2}\int_{\textit{M}}Q_{ijkl}P_{ik}P_{jl}u^{m}+\frac{4(m+1)^2}{m^3}\int_{\textit{M}}P_{ij}P_{ik}P_{jk}u^{m}\nonumber\\
        &+\frac{2(m+1)^2(m+n)}{m^3}\int_{\textit{M}}(\rho-\lambda)|P|^2u^{m}.
    \end{align}
Furthermore, a second application of \eqref{divQ} gives
    \begin{align}\label{43}
        \int_{\textit{M}}|{\rm div}Q|^2 u^{m}
        &=\frac{(m+1)^2}{m^2}\int_{\textit{M}}|\nabla_{i}P_{jk}-\nabla_{j}P_{ik}|^2u^{m}\nonumber\\
        &=\frac{2(m+1)^2}{m^2}\{\int_{\textit{M}}|\nabla P|^2u^{m}-\int_{\textit{M}}\nabla_{j}P_{ik}\nabla_{i}P_{jk}u^{m}\},
    \end{align}
where
    \begin{align}\label{44}
       - \int_{\textit{M}}\nabla_{j}P_{ik}\nabla_{i}P_{jk}u^{m}
       &= \int_{\textit{M}}P_{ik}\nabla_{j}(\nabla_{i}P_{jk}u^{m})\nonumber\\
       &=\int_{\textit{M}}P_{ik}\nabla_{j}\nabla_{i}P_{jk}u^{m}+m
       \int_{\textit{M}}\nabla_{i}P_{jk}P_{ik}\nabla_{j}uu^{m-1}.
    \end{align}
For the first term on the left-hand side of \eqref{44}, we apply the Ricci identity and express the resulting Ricci and Riemann curvature tensors in terms of the tensors $P$ and $Q.$ Indeed, we have
    \begin{align}\label{45}
       \int_{\textit{M}}P_{ik}\nabla_{j}\nabla_{i}P_{jk}u^{m}
       &= \int_{\textit{M}}P_{ik}(\nabla_{i}\nabla_{j}P_{jk}+P_{lk}R_{ljij}+P_{jl}R_{lkij})u^{m}\nonumber\\
       &=\int_{\textit{M}}P_{ik}P_{lk}(P_{il}+\rho g_{il})u^{m}\nonumber\\
       &\quad-\int_{\textit{M}}P_{ik}P_{jl}\{
       Q_{ijkl}-\frac{1}{m}(P_{ik}g_{jl}+P_{jl}g_{ik}-P_{il}g_{jk}-P_{jk}g_{il})\nonumber\\
       &\quad-\frac{\rho-\lambda}{m}(g_{ik}g_{jl}-g_{il}g_{jk})\}u^{m}\nonumber\\
       &=-\int_{\textit{M}}Q_{ijkl}P_{ik}P_{jl}u^{m}+\frac{m-2}{m}\int_{\textit{M}}P_{ij}P_{ik}P_{jk}u^{m}\nonumber\\
       &\quad+(\rho+\frac{2}{m}trP-\frac{\rho-\lambda}{m})\int_{\textit{M}}|P|^2u^{m}+\frac{\rho-\lambda}{m}\int_{\textit{M}}|tr P|^2u^{m}.
    \end{align}
    The computation for the second term on the left-hand side of \eqref{44} has already been carried out in Lemma \ref{lemm1}. 
    Substituting \eqref{44}, \eqref{45}, and \eqref{lemma1} into \eqref{43}, and then combining the resulting identity with \eqref{P2}, we obtain
    \begin{align}\label{46}
        \int_{\textit{M}}|{\rm div}Q|^2 u^{m}
        &=\frac{2(m+1)^2}{m^2}\int_{\textit{M}}|\nabla P|^2 u^{m}-\frac{2(m+1)^2}{m^2}\int_{\textit{M}}Q_{ijkl}P_{ik}P_{jl}u^{m}\nonumber\\
        &\quad-\frac{4(m+1)^2}{m^3}\int_{\textit{M}}P_{ij}P_{ik}P_{jk}u^{m}\nonumber\\
        &\quad-\frac{2(m+1)^2(m+n)}{m^3}\int_{\textit{M}}(\rho-\lambda)|P|^2u^{m}.
    \end{align}
Finally, adding \eqref{42} and \eqref{46} we complete the proof.

     \end{proof}
With the aid of Proposition \ref{prop4}, the proof of Theorem \ref{thm2} follows by an argument analogous to that of Theorem \ref{thm1}.
\begin{proof}[Proof of Theorem \ref{thm2}]

We begin with a straightforward computation
   \begin{align}\label{3.21}
    |{\rm div}Q|^2
    &=\frac{(m+1)^2}{m^2}|\nabla_{i}P_{jk}-\nabla_{j}P_{ik}|^2\nonumber\\
    &=\frac{2(m+1)^2}{m^2}(|\nabla P|^2-\nabla_{i}P_{jk}\nabla_{j}P_{ik}).
    \end{align}
Adding \eqref{3.9} and \eqref{3.21} yields
    \begin{equation}
        \frac{(m+1)^2}{m^2}|{\rm div} Q|^2+\frac{1}{3}|D|^2=3|\nabla P|^2.\nonumber
    \end{equation}
Under the cyclic parallel condition $D=0$, the above identity reduces to $\frac{(m+1)^2}{m^2}|{\rm div}Q|^2=3|\nabla P|^2.$ Finally, together with Proposition \ref{prop4}, this  implies  $\nabla P=0,$ i.e. the Ricci tensor is parallel.

By the result of He, Petersen and Wylie \cite[Corollary 1.12]{he2014warped} on quasi-Einstein manifolds with harmonic curvature, we assert that $(M,g,u)$ is rigid. Together with their classification result\cite[Proposition 2.3 and 2.5]{he2014warped}, this completes the proof.

\end{proof}

\subsection{The rigity of critical spaces}
   
Consider now the vacuum static equation , this case in fact corresponds to the $m=1$ setting in the quasi-Einstein context, though it is substantially more difficult to handle. In order to unify our results, we shall now consider the general critical equation \eqref{eqn3}.
    \begin{Lemma}\label{lemm2}
    Let $(M^n, g,f)$ be an $(n\ge3)-$dimensional closed Riemannian manifold satisfying \eqref{eqn3}. Then
        \begin{align}
        \int_{\textit{M}}|\mathring{Ric}|^2(f+a)=0.\nonumber
        \end{align}

    \end{Lemma}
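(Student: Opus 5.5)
We prove the identity by contracting \eqref{eqn3} against $\mathring{Ric}$ and integrating by parts; the standing fact from Section 2 that $R$ is constant (for non-constant $f$) is what makes the divergence term vanish. If $f$ is constant, the trace of \eqref{eqn3} gives $\frac{R}{n(n-1)}fg=bg$ up to the $(f+a)\mathring{Ric}$ term, and the equation itself forces $(f+a)\mathring{Ric}=0$ pointwise. The integrand then vanishes identically, so this case is trivial. From now on assume $f$ is non-constant, so that $R$ is constant.

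\textbf{Step 1: contract the equation.} Contract \eqref{eqn3} with $\mathring{R}_{ij}$. Since $\mathring{Ric}$ is traceless, the terms $\frac{R}{n(n-1)}fg_{ij}$ and $bg_{ij}$ drop out, leaving the pointwise identity
\begin{equation*}
(f+a)|\mathring{Ric}|^2=\mathring{R}_{ij}\nabla_i\nabla_j f .
\end{equation*}

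\textbf{Step 2: integrate by parts.} Integrate over the closed manifold $M$, where there are no boundary terms:
\begin{equation*}
\int_M (f+a)|\mathring{Ric}|^2=\int_M \mathring{R}_{ij}\nabla_i\nabla_j f=-\int_M \nabla_i\mathring{R}_{ij}\,\nabla_j f .
\end{equation*}

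\textbf{Step 3: kill the divergence term.} By the contracted second Bianchi identity,
\begin{equation*}
\nabla_i\mathring{R}_{ij}=\nabla_iR_{ij}-\tfrac{1}{n}\nabla_jR=\tfrac{n-2}{2n}\nabla_jR .
\end{equation*}
This vanishes because $R$ is constant (Corvino's result, as quoted after \eqref{eqn3}). Hence the right-hand side of Step 2 is zero, which gives the claim.

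There is essentially no obstacle here. The one point needing care is the appeal to constancy of $R$, which holds only when $f$ is non-constant; that is why the constant-$f$ case is handled separately at the start.
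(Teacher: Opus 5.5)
Your proof is correct and follows the paper's argument: contract \eqref{eqn3} with $\mathring{R}_{ij}$, integrate over the closed manifold, and integrate by parts. You also spell out two points the paper leaves implicit: the divergence term vanishes because $\nabla_i\mathring{R}_{ij}=\frac{n-2}{2n}\nabla_j R=0$ by constancy of $R$, and the constant-$f$ case is trivial since then $(f+a)\mathring{Ric}=0$ pointwise.
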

    \begin{proof}
     Multiplying both sides of equation \eqref{eqn3} by $\mathring{R}_{ij}$ and integrating over $\textit{M},$ we apply integration by parts to the left-hand side, thereby obtaining the claimed result.
    \end{proof}

Therefore, for the sake of consistency, we always substitute $\int_{\textit{M}}|\mathring{Ric}|^2(f+a)^2$ for $\int_{\textit{M}}|\mathring{Ric}|^2f(f+a)$ in the following discussion. Unlike in the $(m > 1)$-quasi-Einstein case, the term $|\mathring{Ric}(\nabla f)|^2$ can no longer be neglected here. Indeed, we have the following lemma.

    \begin{Lemma}\label{lemm3}
    Let $(M^n, g,f)$ be an $(n\geq3)-$dimensional closed Riemannian manifold satisfying \eqref{eqn3}. Then
         \begin{align}
        \int_{\textit{M}}\nabla_{i}\mathring{R}_{jk}\mathring{R}_{ik}\nabla_{j}f(f+a)
        &=-\int_{\textit{M}}tr(\mathring{Ric}^3)(f+a)^2+\frac{1}{n(n-1)}\int_{\textit{M}}R|\mathring{Ric}|^2(f+a)^2\nonumber\\
        &\quad-\int_{\textit{M}}|\mathring{Ric}(\nabla f)|^2.
    \end{align}
    \end{Lemma}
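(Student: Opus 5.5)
The plan is to imitate the proof of Lemma \ref{lemm1}: move the derivative $\nabla_i$ off $\mathring{R}_{jk}$ by one integration by parts, then use the structure equation \eqref{eqn3} to rewrite the Hessian of $f$. Since $M$ is closed, no boundary terms appear. I first dispose of the degenerate case where $f$ is constant. Then $\nabla f=0$, so the left-hand side and the last term on the right vanish. Equation \eqref{eqn3} reduces to $(f+a)\mathring{R}_{ij}=\big(\tfrac{R f}{n(n-1)}-b\big)g_{ij}$, and taking the traceless part gives $(f+a)\mathring{Ric}=0$. Hence the two remaining integrals also vanish. So I may assume $f$ is non-constant, and then, as recalled after \eqref{Deltafab} (via \cite{corvino2000scalar}), the scalar curvature $R$ is constant. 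This gives $\nabla_i\mathring{R}_{ik}=\nabla_iR_{ik}=\tfrac12\nabla_kR=0$.

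The integration by parts reads
\begin{align}
\int_M\nabla_i\mathring{R}_{jk}\mathring{R}_{ik}\nabla_jf(f+a)
&=-\int_M\mathring{R}_{jk}\nabla_i\big(\mathring{R}_{ik}\nabla_jf(f+a)\big)\nonumber\\
&=-\int_M\mathring{R}_{jk}\mathring{R}_{ik}\nabla_i\nabla_jf\,(f+a)-\int_M\mathring{R}_{jk}\mathring{R}_{ik}\nabla_if\nabla_jf,\nonumber
\end{align}
where the term containing $\nabla_i\mathring{R}_{ik}$ has been dropped because it vanishes. The last integral is exactly $-\int_M|\mathring{Ric}(\nabla f)|^2$. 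This accounts for the new term that, unlike in the quasi-Einstein case, does not vanish.

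For the Hessian term I substitute $\nabla_i\nabla_jf=(f+a)\mathring{R}_{ij}-\big(\tfrac{Rf}{n(n-1)}-b\big)g_{ij}$ from \eqref{eqn3}. The first piece yields $-\int_M tr(\mathring{Ric}^3)(f+a)^2$. The second yields $\int_M\big(\tfrac{Rf}{n(n-1)}-b\big)|\mathring{Ric}|^2(f+a)$. To put this in the stated form, I write
\[
\frac{Rf}{n(n-1)}-b=\frac{R}{n(n-1)}(f+a)-\Big(b+\frac{Ra}{n(n-1)}\Big).
\]
Because $R$ is constant, the second summand is a constant. Its contribution is that constant times $\int_M|\mathring{Ric}|^2(f+a)$, which vanishes by Lemma \ref{lemm2}. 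What remains is $\tfrac{1}{n(n-1)}\int_M R|\mathring{Ric}|^2(f+a)^2$, which matches the claimed identity.

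The only delicate points are the vanishing of $\operatorname{div}\mathring{Ric}$, which needs the constancy of $R$ and hence the non-constant/constant case split, and the use of Lemma \ref{lemm2} to absorb the constant part of the coefficient. This absorption is precisely the substitution of $\int_M|\mathring{Ric}|^2(f+a)^2$ for $\int_M|\mathring{Ric}|^2 f(f+a)$ announced before the lemma. I expect no real obstacle beyond careful bookkeeping of signs.
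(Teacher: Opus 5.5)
Your proof is correct and follows the paper's argument: one integration by parts, substitution of \eqref{eqn3} for the Hessian, and Lemma \ref{lemm2} to absorb the constant part $b+\frac{Ra}{n(n-1)}$ of the coefficient. You also make explicit two points the paper uses tacitly: the constancy of $R$ for non-constant $f$, which kills the $\nabla_i\mathring{R}_{ik}$ term and lets $R$ be pulled out of the integral, and the trivial case of constant $f$.
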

    \begin{proof}
    Integrating by parts on the left-hand and combining equation \eqref{eqn3}, we immediately get
        \begin{align}
        \int_{\textit{M}}\nabla_{i}\mathring{R}_{jk}\mathring{R}_{ik}\nabla_{j}f(f+a)
        &=-\int_{\textit{M}}\mathring{R}_{jk}\nabla_{i}(\mathring{R}_{ik}\nabla_{j}f(f+a))\nonumber\\
        &=-\int_{\textit{M}}\mathring{R}_{jk}\mathring{R}_{ik}\nabla_{i}\nabla_{j}f(f+a)-\int_{\textit{M}}|\mathring{Ric}(\nabla f)|^2\nonumber\\
        &=-\int_{\textit{M}}tr(\mathring{Ric}^3)(f+a)^2+\frac{1}{n(n-1)}\int_{\textit{M}}R|\mathring{Ric}|^2(f+a)^2\nonumber\\
        &\quad-\int_{\textit{M}}|\mathring{Ric}(\nabla f)|^2.
        \end{align} 
    In the last equality we have applied Lemma \ref{lemm2} twice: first to obtain$\int_{\textit{M}}|\mathring{Ric}|^2f(f+a)=\int_{\textit{M}}|\mathring{Ric}|^2(f+a)^2,$ and second to ensure the vanishing of $b\int_{\textit{M}}|\mathring{Ric}|^2(f+a).$  
    \end{proof}
   Finally, we present a lemma that will be useful in handling the integral of the Ricci curvature along $\nabla f.$

    \begin{Lemma}\label{lemm4}
    Let $(M^n, g,f)$ be an $(n\geq3)-$dimensional closed Riemannian manifold satisfying \eqref{eqn3}. Then
        \begin{align}\label{lemma4}
            \int_{\textit{M}}\mathring{Ric}(\nabla f,\nabla f)=-\int_{\textit{M}}|\mathring{Ric}|^2(f+a)^2.
        \end{align}
    \end{Lemma}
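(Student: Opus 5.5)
The idea is to integrate by parts once, moving the derivative off one copy of $\nabla f$ and onto $\mathring{Ric}$ and the other copy of $\nabla f$. Equation \eqref{eqn3} then turns the Hessian of $f$ back into $\mathring{Ric}$, and Lemma \ref{lemm2} converts the resulting weight $f(f+a)$ into $(f+a)^2$.

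First I need that $\mathring{Ric}$ is divergence free. If $f$ is non-constant, the scalar curvature is constant by the remark following \eqref{eqn3} (the analogue of \cite[Proposition 2.3]{corvino2000scalar}). The contracted second Bianchi identity $\nabla_j R_{ij}=\frac12\nabla_i R$ then gives $\nabla_j\mathring{R}_{ij}=\frac{n-2}{2n}\nabla_i R=0$. If instead $f$ is constant, then $\nabla f=0$ and \eqref{eqn3} forces $(f+a)\mathring{Ric}=0$, so both sides of \eqref{lemma4} vanish trivially. Hence I may assume $\nabla_j\mathring{R}_{ij}=0$.

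Since $M$ is closed, there are no boundary terms, and the divergence theorem gives
\begin{align*}
\int_M \mathring{R}_{ij}\nabla_i f\nabla_j f
&=-\int_M f\,\nabla_i(\mathring{R}_{ij}\nabla_j f)
=-\int_M f\,\mathring{R}_{ij}\nabla_i\nabla_j f .
\end{align*}
Next I substitute \eqref{eqn3}, i.e. $\nabla_i\nabla_j f=(f+a)\mathring{R}_{ij}+\big(b-\frac{R}{n(n-1)}f\big)g_{ij}$. Because $\mathring{Ric}$ is traceless, the $g_{ij}$ terms drop out, leaving $-\int_M f(f+a)|\mathring{Ric}|^2$.

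Finally, write $f(f+a)=(f+a)^2-a(f+a)$. Lemma \ref{lemm2} gives $\int_M|\mathring{Ric}|^2(f+a)=0$, which yields $\int_M f(f+a)|\mathring{Ric}|^2=\int_M(f+a)^2|\mathring{Ric}|^2$ and hence \eqref{lemma4}.

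The argument is short and routine. The only point needing care is justifying $\operatorname{div}\mathring{Ric}=0$, which relies on the constancy of $R$; this is why the trivial case of constant $f$ is handled separately.
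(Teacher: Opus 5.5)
Your proof is correct and is essentially the paper's argument. The paper multiplies \eqref{eqn3} by $\mathring{R}_{ij}(f+a)$ and integrates by parts once, using $\operatorname{div}\mathring{Ric}=0$ and the tracelessness of $\mathring{Ric}$, which is the same identity you use. The only difference is the weight: since $\nabla_i f=\nabla_i(f+a)$, integrating against $f+a$ rather than $f$ produces $(f+a)^2$ directly, so the paper does not need the detour through Lemma \ref{lemm2}.
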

    \begin{proof}
         Multiplying both sides of equation \eqref{eqn3} by $\mathring{R}_{ij}(f+a)$ and integrating over $\textit{M},$ we apply integration by parts to the left-hand side, which yields the desired result.
    \end{proof}
We are now in a position to give an analogous integral identity on the critical space. Unlike in the case of $(m > 1)$-quasi-Einstein manifold, here we multiply $(f+a)^2$ as the auxiliary function to ensure that the principal term is nonnegative. Moreover, the inner product term cannot be handled at this stage.
    
  \begin{Prop}\label{prop5}
      Let $(M^n, g)$ be an $(n\ge3)-$dimensional closed Riemannian manifold. Suppose that $f$ is a non-constant smooth solution to \eqref{Eqn3}. Then
          \begin{align}
           \int_{\textit{M}}|{\rm div}S|^2(f+a)^2    &=4\int_{\textit{M}}|\nabla\mathring{Ric}|^2(f+a)^2+8\int_{\textit{M}}\nabla_{i}\mathring{R}_{jk}\mathring{R}_{ik}\nabla_{j}f(f+a)\nonumber\\
           &\quad+2\int_{\textit{M}}\langle \nabla|\mathring{Ric}|^2,\nabla f  \rangle(f+a).  
          \end{align}
  \end{Prop}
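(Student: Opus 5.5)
The plan is to imitate the proofs of Proposition \ref{prop3} and Proposition \ref{prop4}: compute $\int_M|{\rm div}S|^2(f+a)^2$ in two different ways and add them (more precisely, average them), so that the cubic curvature terms $S_{ijkl}\mathring{R}_{ik}\mathring{R}_{jl}$ and $tr(\mathring{Ric}^3)$ cancel. Since $f$ is non-constant, $R$ is constant. Hence $\nabla Ric=\nabla\mathring{Ric}$ and ${\rm div}\,\mathring{Ric}=0$, and \eqref{divS} reads
\[
\nabla_lS_{ijkl}=-2(\nabla_i\mathring{R}_{jk}-\nabla_j\mathring{R}_{ik}).
\]

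\textbf{First computation.} Write $|{\rm div}S|^2(f+a)^2=\nabla_lS_{ijkl}(f+a)\cdot(-2)(\nabla_i\mathring R_{jk}-\nabla_j\mathring R_{ik})(f+a)$. Replace the first factor using the second line of \eqref{divS}, namely $-2S_{ijkl}\nabla_lf-2(R_{jl}g_{ik}-R_{il}g_{jk})\nabla_lf$.
\begin{itemize}
\item The $S(\nabla f)$ part, by antisymmetry in $i,j$, becomes $8\int S_{ijkl}\nabla_lf\,\nabla_j\mathring R_{ik}(f+a)$. I integrate this by parts in $j$, moving the derivative onto $S_{ijkl}\nabla_lf(f+a)$.
\begin{itemize}
\item The term $\nabla_jS_{ijkl}$ is rewritten by the first Bianchi-type symmetry of $S$ (an algebraic curvature tensor) and \eqref{divS} once more.
\item The term $\nabla_j\nabla_lf$ is replaced by \eqref{eqn3}, giving $S_{ijkl}\mathring R_{ik}\mathring R_{jl}(f+a)^2$ plus trace terms, handled by $tr(S)=(n-1)Ric$.
\item The term $\nabla_jf\nabla_lf$ is converted back, via Proposition \ref{prop2}, into $\nabla_i\mathring R_{jk}\mathring R_{ik}\nabla_jf(f+a)$ and $|\mathring{Ric}(\nabla f)|^2$.
\end{itemize}
\item The $(R_{jl}g_{ik}-R_{il}g_{jk})\nabla_lf$ part contracts against $\nabla_i\mathring R_{jk}-\nabla_j\mathring R_{ik}$. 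Because the divergence and the trace of $\mathring R$ vanish, it reduces to multiples of $\mathring R_{jl}\nabla_lf\nabla_i\mathring R_{ij}=0$ and of $\mathring R_{il}\nabla_lf\nabla_j\mathring R_{ik}g_{jk}$-type terms. These are expressible as $\langle\nabla|\mathring{Ric}|^2,\nabla f\rangle(f+a)$ and $\nabla_i\mathring R_{jk}\mathring R_{ik}\nabla_jf(f+a)$; this is where the last term of the statement originates.
\end{itemize}

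\textbf{Second computation.} Expand directly:
\[
|{\rm div}S|^2=8\bigl(|\nabla\mathring{Ric}|^2-\nabla_j\mathring R_{ik}\nabla_i\mathring R_{jk}\bigr).
\]
Integrate the cross term by parts against $(f+a)^2$. This produces $-\int\mathring R_{ik}\nabla_j\nabla_i\mathring R_{jk}(f+a)^2$ together with $-2\int\nabla_i\mathring R_{jk}\mathring R_{ik}\nabla_jf(f+a)$. In the first of these I commute derivatives by the Ricci identity. The term $\nabla_i\nabla_j\mathring R_{jk}$ drops, and the remaining curvature terms $\mathring R\mathring R R_{ljij}+\mathring R\mathring R R_{lkij}$ are rewritten through $S$. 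This uses $Rm=S-Ric\owedge g+\frac{R}{2(n-1)}g\owedge g$, exactly as in \eqref{45}, and yields $-S_{ijkl}\mathring R_{ik}\mathring R_{jl}$, $tr(\mathring{Ric}^3)$ and $R|\mathring{Ric}|^2$ terms.

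\textbf{Combining.} Adding the two expressions should make the $S\mathring R\mathring R$ terms cancel; halving the sum explains the coefficient $4$ in front of $|\nabla\mathring{Ric}|^2$. I then use Lemma \ref{lemm2}, Lemma \ref{lemm3} and Lemma \ref{lemm4} to trade the leftover $tr(\mathring{Ric}^3)(f+a)^2$, $R|\mathring{Ric}|^2(f+a)^2$, $|\mathring{Ric}(\nabla f)|^2$ and $\mathring{Ric}(\nabla f,\nabla f)$ contributions for the inner-product term $\int\nabla_i\mathring R_{jk}\mathring R_{ik}\nabla_jf(f+a)$. The aim is that everything collapses to $8\int\nabla_i\mathring R_{jk}\mathring R_{ik}\nabla_jf(f+a)+2\int\langle\nabla|\mathring{Ric}|^2,\nabla f\rangle(f+a)$. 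As the remark before the statement indicates, I will not try to eliminate this inner-product term.

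\textbf{Main obstacle.} I expect the main difficulty to be the bookkeeping in the first computation. Unlike the quasi-Einstein case, ${\rm div}((f+a)^2S)\neq0$, so integrating by parts produces extra $\nabla_jS_{ijkl}$ and $\nabla f\otimes\nabla f$ terms. Matching these exactly, with all constants depending on $n$ cancelling against the second computation, is where errors are likely. I would first check the constants on the Einstein case and on the product $\mathbb S^k\times N^{n-k}$.
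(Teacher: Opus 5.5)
Your plan is the paper's proof. Your first computation is \eqref{53}--\eqref{56}, your second is \eqref{57}--\eqref{60}, and the two are added, halved and simplified with Lemmas \ref{lemm2}--\ref{lemm4}. Two points of your bookkeeping are wrong, however, and should be fixed before you run the computation.

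First, the contribution of $(R_{jl}g_{ik}-R_{il}g_{jk})\nabla_lf$ in the initial product is identically zero, so it is not where the last term comes from. Contracted with $\nabla_i\mathring R_{jk}-\nabla_j\mathring R_{ik}$ it equals $R_{jl}\nabla_lf\,(\nabla_i\mathring R_{ij}-\nabla_j\mathring R_{ii})-R_{il}\nabla_lf\,(\nabla_i\mathring R_{jj}-\nabla_j\mathring R_{ij})$. Every bracket is a trace or a divergence of $\mathring{Ric}$, and your second type, $\mathring R_{il}\nabla_lf\,\nabla_k\mathring R_{ik}$, vanishes as well. That is why \eqref{53} starts from the $S(\nabla f)$ term alone. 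The term $\langle\nabla|\mathring{Ric}|^2,\nabla f\rangle(f+a)$ only appears after the integration by parts, through $\mathring R_{ik}S_{ijkl}\nabla_jf\nabla_lf$. By Proposition \ref{prop2}, the piece $-(f+a)\nabla_j\mathring R_{ik}$ gives $-\frac12\langle\nabla|\mathring{Ric}|^2,\nabla f\rangle(f+a)$. It comes alongside $T:=\nabla_i\mathring R_{jk}\mathring R_{ik}\nabla_jf\,(f+a)$, $-|\mathring{Ric}(\nabla f)|^2$ and $-\frac Rn\mathring{Ric}(\nabla f,\nabla f)$; this is \eqref{55}. Your list for that step misses the first and the last of these, and the last is exactly what Lemma \ref{lemm4} is for.

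Second, rewriting $\nabla_jS_{ijkl}$ uses the pair symmetry $S_{ijkl}=S_{klij}$, so it is a divergence in the last slot. There the $Ric\cdot\nabla f$ terms do not drop out, so their sign matters. Proposition \ref{prop2} gives
$\nabla_lS_{ijkl}(f+a)=-2S_{ijkl}\nabla_lf+2(R_{jl}g_{ik}-R_{il}g_{jk})\nabla_lf$.
The $-2$ in the second line of \eqref{divS}, which you quote, is a misprint; \eqref{53} actually uses $+2$. With the printed sign the right-hand side acquires an extra $16\int_M\bigl(|\mathring{Ric}(\nabla f)|^2+\frac Rn\mathring{Ric}(\nabla f,\nabla f)\bigr)$. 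This does not vanish on the static products $\mathbb S^k\times N^{n-k}$ with $2\le k\le n-1$, so your planned check would catch it. It is cleaner to use the first line: $\nabla_jS_{klij}=-2(\nabla_k\mathring R_{li}-\nabla_l\mathring R_{ki})$ gives
$8\int_M\mathring R_{ik}\nabla_jS_{ijkl}\nabla_lf\,(f+a)=-16\int_MT+8\int_M\langle\nabla|\mathring{Ric}|^2,\nabla f\rangle(f+a)$ at once.

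Finally, the $tr(\mathring{Ric}^3)$ terms do not cancel on adding. The sum of \eqref{56} and \eqref{60} leaves $-16\int_M tr(\mathring{Ric}^3)(f+a)^2+\frac{16}{n(n-1)}\int_M R|\mathring{Ric}|^2(f+a)^2-16\int_M|\mathring{Ric}(\nabla f)|^2$. By Lemma \ref{lemm3} this is $16\int_M T$, and halving then gives the statement.
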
  
 \begin{proof}
Firstly, by virtue of \eqref{divS}, we obtain

    \begin{align}\label{53}
        \int_{\textit{M}}|{\rm div}S|^2 (f+a)^2
        &=-4\int_{\textit{M}}S_{ijkl}\nabla_{l} f(\nabla_{j}\mathring{R}_{ik}-\nabla_{i}\mathring{R}_{jk})(f+a)\nonumber\\
        &=-8\int_{\textit{M}}S_{ijkl}\nabla_{l} f\nabla_{j}\mathring{R}_{ik}(f+a)\nonumber\\
        &=8\int_{\textit{M}}\mathring{R}_{ik}\nabla_{j}(S_{ijkl}\nabla_{l} f(f+a))\nonumber\\
        &=8\int_{\textit{M}}\mathring{R}_{ik}\{-2S_{ijkl}\nabla_{j} f+2(R_{lj}g_{ik}-R_{jk}g_{il})\nabla_{j}f\}\nabla_{l}f\nonumber\\
        &\quad+8\int_{\textit{M}}\mathring{R}_{ik}S_{ijkl}\nabla_{j}\nabla_{l}f(f+a) 
        +8\int_{\textit{M}}\mathring{R}_{ik}S_{ijkl}\nabla_{j}f\nabla_{l}f \nonumber\\
        &=8\int_{\textit{M}}\mathring{R}_{ik}S_{ijkl}\nabla_{j}\nabla_{l}f(f+a)\nonumber-8\int_{\textit{M}}\mathring{R}_{ik}S_{ijkl}\nabla_{j}f\nabla_{l}f
        \\
        &\quad-16\int_{\textit{M}}|\mathring{Ric}(\nabla f)|^2-\frac{16}{n}\int_{\textit{M}}R\mathring{Ric}(\nabla f,\nabla f).
    \end{align}
Next, combining equation \eqref{eqn3} and Proposition \ref{prop2}, we arrive at the following two identities
    \begin{align}\label{54}
        \int_{\textit{M}}\mathring{R}_{ik}S_{ijkl}\nabla_{j}\nabla_{l}f(f+a)
        &=\int_{\textit{M}}\mathring{R}_{ik}S_{ijkl}\{(f+a)\mathring{R}_{jl}-\frac{R}{n(n-1)}fg_{jl}+bg_{jl}\}(f+a)\nonumber\\
        &=\int_{\textit{M}}S_{ijkl}\mathring{R}_{ik}\mathring{R}_{jl}(f+a)^2-\frac{1}{n}\int_{\textit{M}}R|\mathring{Ric}|^2(f+a)^2,
    \end{align}
and
    \begin{align}\label{55}
        \int_{\textit{M}}\mathring{R}_{ik}S_{ijkl}\nabla_{j}f\nabla_{l}f
        &=\int_{\textit{M}}\mathring{R}_{ik}\nabla_{j}f\{(f+a)(\nabla_{i}\mathring{R}_{jk}-\nabla_{j}\mathring{R}_{ik})+
        (R_{jl} g_{ik}-R_{il} g_{jk})\nabla_{l} f\}\nonumber\\
        &=\int_{\textit{M}}\nabla_{i}\mathring{R}_{jk}\mathring{R}_{ik}\nabla_{j}f(f+a)-\frac{1}{2}\int_{\textit{M}}\langle \nabla|\mathring{Ric}|^2,\nabla f  \rangle(f+a)\nonumber\\
        &\quad-\int_{\textit{M}}|\mathring{Ric}(\nabla f)|^2-\frac{1}{n}\int_{\textit{M}}R\mathring{Ric}(\nabla f,\nabla f).      
    \end{align}
Substituting \eqref{54},\eqref{55} and \eqref{lemma4} into \eqref{53}, we obtain
    \begin{align}\label{56}
     \int_{\textit{M}}|{\rm div} S|^2 (f+a)^2
     &=8\int_{\textit{M}}S_{ijkl}\mathring{R}_{ik}\mathring{R}_{jl}(f+a)^2+8\int_{\textit{M}}tr(\mathring{Ric}^3)(f+a)^2\nonumber\\
     &\quad-\frac{8}{n(n-1)}\int_{\textit{M}}R|\mathring{Ric}|^2(f+a)^2+4\int_{\textit{M}}\langle\nabla|\mathring{Ric}|^2,\nabla f  \rangle(f+a).
    \end{align}
On the other hand, by using \eqref{divS} again 
    \begin{align}\label{57}
      \int_{\textit{M}}|{\rm div} S|^2 (f+a)^2
      &=4\int_{\textit{M}}|\nabla_{i}\mathring{R}_{jk}-\nabla_{j}\mathring{R}_{ik}|^2(f+a)^2\nonumber\\
      &=8\int_{\textit{M}}|\nabla\mathring{Ric}|^2(f+a)^2-8\int_{\textit{M}}\nabla_{i}\mathring{R}_{jk}\nabla_{j}\mathring{R}_{ik}(f+a)^2.
    \end{align}
Integrating the last term by parts gives
    \begin{align}\label{58}
     -\int_{\textit{M}}\nabla_{i}\mathring{R}_{jk}\nabla_{j}\mathring{R}_{ik}(f+a)^2
     &=\int_{\textit{M}}\mathring{R}_{ik}\nabla_{j}(\nabla_{i}\mathring{R}_{jk}(f+a)^2)\nonumber\\
     &=\int_{\textit{M}}\mathring{R}_{ik}\nabla_{j}\nabla_{i}\mathring{R}_{jk}(f+a)^2+2\int_{\textit{M}}\nabla_{i}\mathring{R}_{jk}\mathring{R}_{ik}\nabla_{j}f(f+a).
    \end{align}
Applying the Ricci identity to the first term on the right-hand side of \eqref{58}, and expressing the resulting Ricci and Riemann curvature tensors in terms of the traceless Ricci tensor and the tensor $S$, we obtain
    \begin{align}\label{59}
        \int_{\textit{M}}\mathring{R}_{ik}\nabla_{j}\nabla_{i}\mathring{R}_{jk}(f+a)^2
        &=\int_{\textit{M}}\mathring{R}_{ik}(\nabla_{i}\nabla_{j}\mathring{R}_{jk}+\mathring{R}_{lk}R_{ljij}+\mathring{R}_{jl}R_{lkij})(f+a)^2\nonumber\\
        &=\int_{\textit{M}}\mathring{R}_{ik}\mathring{R}_{lk}(\mathring{R}_{il}+\frac{R}{n}g_{il})(f+a)^2\nonumber\\
        &\quad-\int_{\textit{M}}\mathring{R}_{ik}\mathring{R}_{jl}\{S_{ijkl}-(R_{ik}g_{jl}+R_{jl}g_{ik}-R_{il}g_{jk}-R_{jk}g_{il})\nonumber\\
        &\quad+\frac{R}{n-1}(g_{ik}g_{jl}-g_{il}g_{jk})\}(f+a)^2\nonumber\\
        &=-\int_{\textit{M}}S_{ijkl}\mathring{R}_{ik}\mathring{R}_{jl}(f+a)^2-\int_{\textit{M}}tr(\mathring{Ric}^3)(f+a)^2\nonumber\\
        &\quad+\frac{1}{n(n-1)}\int_{\textit{M}}R|\mathring{Ric}|^2(f+a)^2.
    \end{align}
Substituting \eqref{58} and \eqref{59} into \eqref{57}, and then applying Lemma \ref{lemm3}, we arrive at
    \begin{align}\label{60}
      \int_{\textit{M}}|{\rm div} S|^2 (f+a)^2
      &=8\int_{\textit{M}}|\nabla\mathring{Ric}|^2(f+a)^2-8\int_{\textit{M}}S_{ijkl}\mathring{R}_{ik}\mathring{R}_{jl}(f+a)^2\nonumber\\
      &\quad-24\int_{\textit{M}}tr(\mathring{Ric}^3)(f+a)^2
      +\frac{24}{n(n-1)}\int_{\textit{M}}R|\mathring{Ric}|^2(f+a)^2\nonumber\\
      &\quad-16\int_{\textit{M}}|\mathring{Ric}(\nabla f)|^2.  
    \end{align}
Finally, adding \eqref{56} and \eqref{60} and applying Lemma \ref{lemm3} again, we complete the proof.
    \end{proof}
    Although this integral identity is not as perfect as one might wish, we can handle the inner product term by making more refined use of the curvature assumptions.

    \begin{proof}[Proof of Theorem \ref{thm3} and \ref{thm4}]

Recall the definition of 3-tensor $D.$ A direct computation of the inner product gives
    \begin{align}
        D_{ijk}\mathring{R}_{ik}\nabla_{j}f=\frac{1}{2}\langle \nabla|\mathring{Ric}|^2,\nabla f  \rangle+2\nabla_{i}\mathring{R}_{jk}\mathring{R}_{ik}\nabla_{j}f.\nonumber
    \end{align}
Under the cyclic parallel assumption $D_{ijk}=0,$ the above identity reduces to
    \begin{align}
        \langle \nabla|\mathring{Ric}|^2,\nabla f  \rangle=-4\nabla_{i}\mathring{R}_{jk}\mathring{R}_{ik}\nabla_{j}f.\nonumber
    \end{align}
At this point, Proposition \ref{prop5} asserts that
    
    \begin{align}
        \int_{\textit{M}}|{\rm div} S|^2 (f+a)^2=4\int_{\textit{M}}|\nabla\mathring{Ric}|^2(f+a)^2.\nonumber
    \end{align}
    The rest of the proof follows exactly the same argument as that of Theorem \ref{thm2}, the only additional observation being that, under the assumption cyclic parallel assumption $D=0,$
  the identity $\frac{1}{4}|{\rm div} S|^2=3|\nabla\mathring{Ric}|^2$ holds. Hence, the Ricci tensor is parallel.

  For vacuum static spaces, we refer to the classification result of closed vacuum metrics with harmonic curvature due to Li \cite[Theorem 1.2]{Li2021} to prove Theorem \ref{thm3}. For CPE metrics, we directly apply the result of Chang, Hwang and Yun \cite[Theorem 1.1]{chang2012critical} to establish Theorem \ref{thm4}.
  \end{proof}
  \begin{Rmk}
    We point out that the integral identity \eqref{60}, obtained via the above procedure, is essentially equivalent to the integral identity derived in \cite{baltazar2018} by applying the Bochner formula
    \begin{align*}
        \frac{1}{2}{\rm div}((f+a)\nabla|Ric|^{2})
        &=(a+f)\Big(\frac{n-2}{n-1}|C_{ijk}|^{2}+|\nabla Ric|^{2}\Big)- (\frac{aR}{n-1}+nb)|\mathring{Ric}|^{2}\nonumber\\
        &\quad+(a+f)\Big(\frac{2}{n-1}R|\mathring{Ric}|^{2}+\frac{2n}{n-2}tr(\mathring{Ric}^{3})\Big)\nonumber\\
        &\quad-\frac{n-2}{n-1}W_{ijkl}\nabla_{l}fC_{ijk}-2(f+a)W_{ijkl}R_{ik}R_{jl}
    \end{align*}
    and then multiplying both sides by $f+a.$ The main difference, however, is that instead of directly dealing with the inner product term $\langle \iota_{\nabla f}W,C\rangle.$ 
  
  \end{Rmk}
  Finally, we briefly outline the proof of Theorem \ref{thm5}. It suffices to point out the differences between the compact-with-boundary Miao–Tam critical metrics and the closed manifolds considered above. In general, we consider the case of equation \eqref{eqn3} with $a=0$ and $b<0,$ where the manifold $(M,g)$ is compact with nonempty boundary. In this setting, the potential function $f$ is assumed to be positive in the interior and to vanish precisely on $\partial M$.

  \begin{proof}[Proof of Theorem \ref{thm5}]
     In this case, Lemma \ref{lemm2} does not hold in general; however, the other applications of integration by parts remain unaffected. Therefore, we need to recover the terms that were previously neglected by using equation \eqref{eqn3}.

    The identity in Lemma \ref{lemm3} should be replaced by the following
    \begin{align}
        \int_{\textit{M}}\nabla_{i}\mathring{R}_{jk}\mathring{R}_{ik}\nabla_{j}ff
        &=-\int_{\textit{M}}tr(\mathring{Ric}^3)f^2+\frac{1}{n(n-1)}\int_{\textit{M}}R|\mathring{Ric}|^2f^2\nonumber\\
        &\quad-\int_{\textit{M}}|\mathring{Ric}(\nabla f)|^2-b\int_{\textit{M}}|\mathring{Ric}|^2f.\nonumber
    \end{align}
    Equation \eqref{54} now takes the following form
    \begin{align}
        \int_{\textit{M}}\mathring{R}_{ik}S_{ijkl}\nabla_{j}\nabla_{l}ff
        &=\int_{\textit{M}}S_{ijkl}\mathring{R}_{ik}\mathring{R}_{jl}f^2-\frac{1}{n}\int_{\textit{M}}R|\mathring{Ric}|^2f^2+(n-1)b\int_{\textit{M}}|\mathring{Ric}|^2f.\nonumber
    \end{align}
    After updating \eqref{56} and \eqref{60}, and then adding them together, we obtain the following new integral identity
    \begin{align}\label{3.34}
        \int_{\textit{M}}|{\rm div}S|^2f^2&=4\int_{\textit{M}}|\nabla\mathring{Ric}|^2f^2+8\int_{\textit{M}}\nabla_{i}\mathring{R}_{jk}\mathring{R}_{ik}\nabla_{j}ff\nonumber\\
           &\quad+2\int_{\textit{M}}\langle \nabla|\mathring{Ric}|^2,\nabla f  \rangle f+4nb\int_{\textit{M}}|\mathring{Ric}|^2f,
    \end{align}
    which differs from the identity in Proposition \ref{prop5} by one additional term.

    Following the same procedure as before, we apply the curvature condition $D=0 $ to \eqref{3.34}, we arrive at 
    \begin{equation*}
        0=2\int_{\textit{M}}|\nabla\mathring{Ric}|^2f^2-nb\int_{\textit{M}}|\mathring{Ric}|^2f.
    \end{equation*}
    In the case where $b<0,$ we obtain directly a stronger result than previously: namely, the metric is Einstein. By the result of Miao–Tam \cite[Theorem 1.1]{Miao2011}, the proof is completed.

    \end{proof}

\section{Acknowledgement}
The author would like to express his sincere gratitude to his advisor, Jian Ye, for his continuous discussions, valuable suggestions, and strong support.

\end{document}